\newcommand{\N}{\mathbb{N}}
\newcommand{\eps}{\varepsilon}
\newcommand{\A}{\mathcal{A}}
\newcommand{\B}{\mathcal{B}}
\newcommand{\ld}{L^2(\mathbb{R}^{d})}
\newcommand{\ldd}{L^2(\mathbb{R}^{2d})}
\newcommand{\rd}{\mathbb{R}^d}
\newcommand{\rdd}{\mathbb{R}^{2d}}
\newcommand{\li}{L^{\infty}}
\newcommand{\swrz}{\mathcal{S}}
\newcommand{\temp}{\mathcal{S}^{\prime}}
\newcommand{\loc}{A_a^{\varphi_1,\varphi_2}}
\newcommand{\sch}{\mathscr{S}}
\newcommand{\tc}{\mathscr{S}^1}
\newcommand{\hs}{\mathscr{S}^2}
\newtheorem{lemma}{Lemma}[section]
\newtheorem{theorem}[lemma]{Theorem}
\newtheorem{definition}[lemma]{Definition}
\newtheorem{corollary}[lemma]{Corollary}
\newtheorem{proposition}[lemma]{Proposition}
\newcommand{\tfa}{time-frequency analysis}
\newcommand{\stft}{short-time Fourier transform}
\newcommand{\tf}{time-frequency}
\newcommand{\fif}{if and only if}
\newcommand{\tfs}{time-frequency shift}
\newcommand{\modsp}{modulation space}
\newcommand{\psdo}{pseudodifferential operator}
\newcommand{\rems}{\noindent\textsl{REMARKS:}}
\newcommand{\rem}{\noindent\textsl{REMARK:}}
\newcommand{\beqa}{\begin{eqnarray*}}
\newcommand{\eeqa}{\end{eqnarray*}}
\DeclareMathOperator*{\supp}{supp}
\newcommand{\field}[1]{\mathbb{#1}}
\newcommand{\bR}{\field{R}}        
\newcommand{\bN}{\field{N}}        
\newcommand{\vf}{\varphi}
 \def\cF{\mathcal{F}}              
 \def\cS{\mathcal{S}}
 \def\cB{\mathcal{B}}
 \def\cA{\mathcal{A}}
 \def\cJ{\mathcal{J}}
\def\rd{\bR^d}
\def\rdd{{\bR^{2d}}}
\def\lrd{L^2(\rd)}
\def\lrdd{L^2(\rdd)}
\def\mif{M^{\infty,1}}
\def\intrdd{\int_{\rdd}}
\def\<{\left<}
\def\>{\right>}
\def\mv1{M_v^1}
\newcommand{\vs}{\vspace{3 mm}}
\begin{document}


\begin{abstract}
Time-frequency localization operators are a  quantization procedure  that
maps symbols on $\rdd $ to operators and depends on two window
functions. We study the range of this quantization and its dependence
on the window functions. If the \stft\ of the windows does not have
any zero, then the range is dense in the Schatten $p$-classes. The
main tool is new version  of the  Berezin transform associated to 
operators on $\lrd $. Although some results are analogous to results about
Toeplitz operators on spaces of holomorphic functions,  the
absence of a complex structure requires the  development of  new methods that
are based on \tfa . 
\end{abstract}


\title{Time-Frequency  Localization Operators and a Berezin Transform}
\author{Dominik Bayer}
\address{Acoustics Research Institute \\
Austrian Academy of Sciences \\
Wohllebengasse 12-14 \\
A-1040 Vienna, Austria}
\email{dominik.bayer@kfs.oeaw.ac.at}
\author{Karlheinz Gr{\"o}chenig}
\address{Faculty of Mathematics \\
University of Vienna \\
Oskar-Morgenstern-Platz 1\\
A-1090 Vienna, Austria}
\email{karlheinz.groechenig@univie.ac.at}
\subjclass[2010]{47B10,94A12,81S30,42C30}
\date{}
\keywords{Time-frequency localization, Berezin quantization,
  short-time Fourier transform, modulation space}
\thanks{K.\ G.\ was
  supported in part by the  project P26273 - N25  of the
Austrian Science Fund (FWF)}
\maketitle


\section{Introduction}

We study the problem whether an arbitrary  operator on $\lrd $ can be
approximated by a \tf\ localization operator. The formalism of \tf\
localization operators is  a  quantization procedure  and 
maps functions on phase space $\rdd $ to operators acting on $\lrd
$. Time-frequency localization operators are widely used in physics
and engineering. In physics, a special case of localization  operators
(with Gaussian window) has been around for
a long time in connection with quantization under the name
(Anti-) Wick operators in the work of Berezin~\cite{be71}. In analysis,  they are used to approximate
\psdo s, see \cite{cofe78,lerner03,sh01}.  
 In the form considered here localization operators  were 
introduced and studied by  Daubechies~\cite{da88} and Ramanathan and
Topiwala~\cite{rato93}. In signal processing localization operators
are  used for \tf\ masking and  feature extraction of signals    from
a  time-frequency representation.

For a more formal discussion of our  results, let us introduce the \tfs s
$\pi (z) f(t) = e^{2\pi i \xi \cdot t} f(t-x)$ for $z=(x,\xi ) \in
\rdd $ and $t\in \rd $. If $a$ is a function on $\rdd $ and $\varphi _1,
\varphi _2$ are two  functions in $\lrd $, so-called  window
functions,  then the
localization operator $A_a^{\varphi _1,\varphi _2}$ is defined formally as 
\begin{align*}
A_a^{\varphi _1, \varphi _2} f = \intrdd a(z)  \langle f, \pi (z) \varphi
_1\rangle \pi  (z) \varphi _2 \, dz \, .
\end{align*}

Roughly speaking, the action of $A_a^{\varphi _1, \varphi _2}$ consists of
multiplying the \stft\ $z \mapsto \langle f, \pi (z) \varphi _1\rangle$ by a
symbol $a$ (this yields a function on $\rdd $) and then projecting
back to $\lrd $. This procedure resembles the definition of Toeplitz
operators in complex analysis, and indeed, localization operators are
also  called Toeplitz operators by some authors~\cite{Toft04,toft08a}.  
The basic properties of localization operators are well
understood. There  are numerous results about  the boundedness properties between function spaces, the
Schatten class properties, and the  symbolic calculus for localization
operators. The reader should consult the surveys~\cite{cogrro06,lerner03,wong02} for the
basic properties
and~\cite{BT05,cogr03,cogr06,FG06,GT11} for further
discussion.

In this paper we  examine the
question how large the class of localization operators (with symbols
from some prescribed class, e.g., from $\lrd$) is compared to the
standard spaces of operators (e.g., the Hilbert-Schmidt class). Can an  arbitrary
operator be approximated by a  localization operator,  and, if yes,
in which topology? In terms of physics, what is the range of the quantization
procedure given by localization operators?

Motivated by the analogy to Toeplitz operators, we will answer this
question in terms of a 
related dual  mapping, the so-called Berezin transform. Fix
$\varphi_1, \varphi_2 \in \lrd$, and 
let $T\in B(\lrd)$ be a bounded operator on $\lrd $.
  The Berezin transform maps operators to functions and is defined to be 
\[ \mathcal{B}T(z) := \left<T\pi(z)\varphi_2, \pi(z)\varphi_1
\right>, \quad\quad z\in\rdd.
\]

 In the following $\sch ^p$ denotes the Schatten $p$-class on
$\lrd $ for $1\leq p <\infty$.  For $p= \infty $, $\sch ^\infty $ consists of all compact
operators on $\lrd $. The conjugate index is $p'=p/(p-1)$. 
Time-frequency  localization operators and the Berezin
transform are related as follows. 

\begin{theorem}\label{mainsimple}
Let $\varphi _1, \varphi _2\in \lrd $, $1\leq    p^{\prime} < \infty$. 
 Then the range $\{A_a^{\varphi _1, \varphi _2}: a\in L^{p'}(\rdd )\} $
is norm-dense in $\sch^{p^{\prime}}(\lrd)$, \fif\ the Berezin
transform $\mathcal{B}$ is one-to-one on $\sch^p$. 
\end{theorem}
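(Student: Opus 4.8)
The plan is to deduce the theorem from a single duality identity that realizes the localization map and the Berezin transform as mutual transposes. Writing the localization operator as a weakly convergent integral of rank-one operators, $A_a^{\varphi_1,\varphi_2}=\intrdd a(z)\,\tens{\pi(z)\varphi_2}{\pi(z)\varphi_1}\,dz$, and using $\operatorname{tr}\big((\tens{u}{v})T\big)=\langle Tu,v\rangle$, I would first establish the trace formula
\[
\operatorname{tr}\big(A_a^{\varphi_1,\varphi_2}\,T\big)=\intrdd a(z)\,\mathcal{B}T(z)\,dz ,
\]
valid for $a\in L^{p'}(\rdd)$ and $T\in\sch^p(\lrd)$. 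This rests on the mapping properties $a\mapsto A_a^{\varphi_1,\varphi_2}\in\sch^{p'}$ and $\mathcal{B}\colon\sch^p\to L^p$ (standard boundedness facts in this theory), so that both sides are finite and the right-hand side is the canonical bilinear pairing of $L^{p'}$ with $L^p$.

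Granting this identity, the equivalence becomes a routine application of Hahn--Banach duality for $1<p'<\infty$, where $(\sch^{p'})^{*}\cong\sch^{p}$ via $T\mapsto\big(S\mapsto\operatorname{tr}(ST)\big)$. For the forward direction, suppose $\mathcal{B}$ is one-to-one on $\sch^{p}$ and that, for contradiction, the range $\{A_a^{\varphi_1,\varphi_2}:a\in L^{p'}\}$ were not norm-dense in $\sch^{p'}$. Hahn--Banach then furnishes a nonzero $T\in\sch^{p}$ annihilating the range, i.e.\ $\operatorname{tr}(A_a^{\varphi_1,\varphi_2}T)=0$ for every $a$; the trace formula gives $\intrdd a\,\mathcal{B}T=0$ for all $a\in L^{p'}$, hence $\mathcal{B}T=0$ in $L^{p}$, and injectivity forces $T=0$, a contradiction. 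Conversely, if the range is dense and $\mathcal{B}T=0$ for some $T\in\sch^{p}$, then $\operatorname{tr}(A_a^{\varphi_1,\varphi_2}T)=0$ for all $a$; by density and continuity of $S\mapsto\operatorname{tr}(ST)$ on $\sch^{p'}$ we obtain $\operatorname{tr}(ST)=0$ for all $S\in\sch^{p'}$, whence $T=0$ by nondegeneracy of the trace pairing. Thus $\mathcal{B}$ is injective.

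The main obstacle is twofold. First, the trace formula must be justified with care: one has to make sense of $A_a^{\varphi_1,\varphi_2}$ as a genuine Schatten-class operator for merely $L^{p'}$ symbols and legitimize the interchange of trace and integral, presumably by approximating $a$ by compactly supported functions and $T$ by finite-rank operators. Second, and more seriously, the clean duality $(\sch^{p'})^{*}\cong\sch^{p}$ fails at the endpoint $p'=1$, where $p=\infty$ and $\sch^{\infty}$ denotes the compact operators: there $(\sch^{1})^{*}=B(\lrd)$ is strictly larger than $\sch^{\infty}$, so Hahn--Banach a priori only detects injectivity of $\mathcal{B}$ on all of $B(\lrd)$ rather than on $\sch^{\infty}$. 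Closing this gap --- showing that for these windows injectivity on the compact operators already forces injectivity on $B(\lrd)$ --- is where the real work lies; I expect it to require a weak$^{*}$-density argument (the compact operators are weak$^{*}$-dense in $B(\lrd)=(\sch^{\infty})^{**}$) together with an appropriate weak$^{*}$-continuity of $\mathcal{B}$, and this is the step I would scrutinize most closely.
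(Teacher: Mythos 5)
For $1<p'<\infty$ your argument is exactly the paper's: your trace identity is Theorem~\ref{T:berezin_relevance} (proved there, as you anticipate, via the singular value decomposition of $T$ and finite--rank truncation, using the boundedness of $a\mapsto A_a^{\varphi_1,\varphi_2}$ from $L^{p'}(\rdd)$ to $\sch^{p'}$ and of $\mathcal{B}$ from $\sch^p$ to $L^p(\rdd)$ to pass to the limit), and your Hahn--Banach step is the content of Theorem~\ref{T:berezin_adjoint} and Corollary~\ref{T:berezin_strong_adjoint} combined with the standard fact that a bounded operator between Banach spaces has norm-dense range if and only if its Banach space adjoint is injective. In this range your proposal is complete and coincides with the paper's route.

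You are also right that $p'=1$ is the delicate endpoint, and your proposal does not close it --- but neither, in any explicit way, does the paper. The preliminaries assert $(\sch^{p})^{*}=\sch^{p'}$ for all $1\le p<\infty$, which at $p=1$ reads $(\sch^1)^{*}=\sch^{\infty}$; with $\sch^{\infty}$ defined as the compact operators this is not correct (the dual of the trace class is $B(\lrd)$), so the duality argument is uniform in $p'$ only if one reads ``$\sch^{\infty}$'' in the theorem as $B(\lrd)$. With the compact-operator reading, the implication ``$\mathcal{B}$ injective on $\sch^{\infty}$ implies density in $\sch^{1}$'' is exactly what is missing, and your sketched repair does not supply it: if $T\in B(\lrd)$ annihilates the range and $T_\alpha\to T$ weak$^*$ with $T_\alpha$ compact, then weak$^*$-continuity of $\mathcal{B}$ only yields $\mathcal{B}T_\alpha\to\mathcal{B}T=0$ in the weak$^*$ topology, not $\mathcal{B}T_\alpha=0$, so the hypothesis of injectivity on the compacts is never engaged. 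Note moreover that the paper's own example ($\mathcal{B}\pi(w)=0$ whenever $V_{\varphi_2}\varphi_1(w)=0$) shows that $\ker\mathcal{B}$ in $B(\lrd)$ may contain non-compact operators, so a genuine repair must rule out such $T$ in the annihilator of the range rather than approximate them. For $1<p'<\infty$ your proof stands as written; for $p'=1$ you should either prove the statement with $(\sch^1)^{*}=B(\lrd)$ in place of $\sch^{\infty}$ (which the same duality argument gives for free) or supply the missing argument that a bounded operator annihilating the range is necessarily compact.
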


In other words, the quantization procedure $a \mapsto \loc $ has dense
range in the Schatten class $\sch ^{p'}$, \fif\ the Berezin transform
is one-to-one on $\sch ^p$. This insight leads us to investigate the
properties of the Berezin transform.

Here is a simple  statement about the  injectivity of the Berezin
transform without any additional technical definitions. 

\begin{proposition} \label{inj12}
Let $\varphi_1, \varphi_2 \in \ld $. 

(i)  If $1\leq p\leq2$ and $V_{\varphi_1} \varphi_2 (z) \not= 0$ for almost
all $z\in \rdd$, then $\B$ is one-to-one from $\sch^p$ to $L^p(\rdd)$.  In
this case, $ \{ \loc : a \in L^{p'}(\rdd )\}$
is norm-dense in $\sch ^{p'}$. 

(ii)  If $p\geq 2$ and $\B$ is one-to-one from $\sch^p$ to $L^p(\rdd)$,
then $V_{\varphi_1} \varphi_2 (z)  \not= 0$ for almost all $z\in \rdd$. 
\end{proposition}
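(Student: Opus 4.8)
The plan is to reduce both statements to the Hilbert--Schmidt class $\sch^2$, where every operator $T$ is described by its spreading function $\eta_T\in\lrdd$ via the weak identity $T=\intrdd\eta_T(z)\,\pi(z)\,dz$; the correspondence $T\leftrightarrow\eta_T$ is a bijection between $\sch^2$ and $\lrdd$ (a unitary up to a constant). The computation I would carry out first is to insert this representation into the Berezin transform,
\[
\B T(w)=\langle T\pi(w)\varphi_2,\pi(w)\varphi_1\rangle=\intrdd\eta_T(z)\,\langle\pi(z)\pi(w)\varphi_2,\pi(w)\varphi_1\rangle\,dz,
\]
and to simplify the inner product under the integral with the commutation relations for \tfs s. Writing $z=(x_z,\xi_z)$ and $w=(x_w,\xi_w)$, this yields $\langle\pi(z)\pi(w)\varphi_2,\pi(w)\varphi_1\rangle=e^{2\pi i\sigma(z,w)}\,c(w)\,G(z)$, where $\sigma$ is the symplectic form, $c(w)$ is a unimodular factor independent of $z$, and $G(z):=\langle\pi(z)\varphi_2,\varphi_1\rangle$. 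Consequently
\[
\B T(w)=c(w)\,\cF_\sigma(\eta_T\cdot G)(w),
\]
i.e.\ $\B T$ is, up to the harmless prefactor $c(w)$, the symplectic Fourier transform of the product $\eta_T\cdot G$.

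Two observations then do the work. Since $\cF_\sigma$ is a bijection of $\lrdd$, we have $\B T=0$ \fif\ $\eta_T\cdot G=0$ almost everywhere. Moreover $G(z)=\overline{V_{\varphi_2}\varphi_1(z)}$, and the reflection identity $|V_{\varphi_2}\varphi_1(z)|=|V_{\varphi_1}\varphi_2(-z)|$ shows that $G\neq0$ a.e.\ exactly when $V_{\varphi_1}\varphi_2\neq0$ a.e.

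For part (i), let $1\le p\le2$, so that $\sch^p\subseteq\sch^2$, and suppose $T\in\sch^p$ satisfies $\B T=0$. Then $\eta_T\cdot G=0$ a.e.; since $G\neq0$ a.e.\ by hypothesis, $\eta_T=0$, and hence $T=0$ because the spreading correspondence is injective. Thus $\B$ is one-to-one on $\sch^p$, and the asserted norm-density of $\{\loc:a\in L^{p'}(\rdd)\}$ in $\sch^{p'}$ follows at once from Theorem~\ref{mainsimple}.

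For part (ii), I argue contrapositively. Suppose $V_{\varphi_1}\varphi_2$, equivalently $G$, vanishes on a set $E$ of positive measure. Pick a nonzero $v\in\lrdd$ with $\supp v\subseteq E$, and let $T\in\sch^2$ be the nonzero operator whose spreading function is $\eta_T=v$. For $p\ge2$ we have $\sch^2\subseteq\sch^p$, so $T\in\sch^p$; yet $\B T=c(w)\,\cF_\sigma(v\cdot G)\equiv0$ because $v\cdot G=0$. Hence $\B$ fails to be one-to-one on $\sch^p$, which is the contrapositive of (ii). The main obstacle is the opening computation: collapsing the inner product to $e^{2\pi i\sigma(z,w)}c(w)G(z)$ demands careful bookkeeping of the phases in the Weyl relations, and one must justify exchanging the integral with the inner product---routine for $T\in\sch^2$ by the $\lrdd$-continuity of the spreading representation. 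Once the symplectic-Fourier structure is established, both implications are purely formal.
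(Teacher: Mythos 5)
Your argument is correct, and up to a symplectic Fourier transform it is the same as the paper's. The paper proves the key $\sch^2$ step (Theorem~\ref{T:berezin_one_to_one}) via Pool's theorem: $T$ is encoded by its Weyl symbol $\sigma\in\lrdd$, and $\B T(z)=\langle\sigma,T_zW(\varphi_1,\varphi_2)\rangle$ is rewritten as the Fourier transform of $\hat{\sigma}\cdot\overline{\widehat{W(\varphi_1,\varphi_2)}}$ so that injectivity hinges on the zero set of $\widehat{W(\varphi_1,\varphi_2)}$. You encode $T$ by its spreading function $\eta_T$ (essentially the symplectic Fourier transform of $\sigma$) and arrive at the symplectic Fourier transform of $\eta_T\cdot\overline{V_{\varphi_2}\varphi_1}$; by formula (\ref{L:ftwig}) these are the same computation written in Fourier-dual coordinates, and your version in fact mirrors the paper's own Proposition~\ref{kernelB}. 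The remaining steps --- the inclusions $\sch^p\subseteq\sch^2$ for $p\le2$ and $\sch^2\subseteq\sch^p$ for $p\ge2$, the choice of a nonzero $\eta_T$ supported in the zero set for part (ii), and the passage to density via $\cA=\B^{\ast}$ --- coincide with Corollary~\ref{C:berezin_one_to_one_cor} and Theorems~\ref{T:l_q_large_norm_density} and~\ref{mainsimpleb}. Two small repairs are needed. First, the product $\eta_T\cdot G$ of two $L^2$ functions lies only in $L^1(\rdd)$, so you should invoke injectivity of the Fourier transform on $L^1$ rather than bijectivity of $\cF_\sigma$ on $\lrdd$ (the paper makes exactly this observation for $\hat{\sigma}\cdot\overline{\widehat{W(\varphi_1,\varphi_2)}}$). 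Second, at the endpoint $p=1$, $p'=\infty$ the density claim is not literally covered by Theorem~\ref{mainsimple}, which assumes $p'<\infty$; there one argues as in Corollary~\ref{T:berezin_strong_adjoint}, viewing $\B:\sch^1\to L^1(\rdd)$ as the adjoint of $\cA:L^0(\rdd)\to\sch^\infty$, so that injectivity of $\B$ on $\sch^1$ yields norm-density of $\{\loc : a\in L^0(\rdd)\}\subseteq\{\loc : a\in L^\infty(\rdd)\}$ in the compact operators.
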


The main point of this proposition is that the injectivity  of the Berezin
transform depends on the properties of the windows $\vf _1$ and $\vf
_2$. 

By combining Theorem~\ref{mainsimple} and Proposition~\ref{inj12} we
obtain a complete characterization when the quantization $a \mapsto
\loc $ has dense range in the class of Hilbert-Schmidt operators $\sch ^2$. 
\begin{corollary}
The range   $\{A_a^{\varphi
    _1, \varphi _2} : a\in L^{p'}(\rdd )\} $ is norm-dense in $\sch
^2$, \fif\ $\langle \varphi _2 , \pi (z) \varphi _1\rangle \neq 0$  for
almost  all $z\in\rdd$. 
\end{corollary}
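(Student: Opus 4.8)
The plan is to specialize the two preceding results to the self-conjugate exponent $p=p'=2$, where the Hilbert--Schmidt class $\sch^2$ is its own dual under $p\mapsto p'=p/(p-1)$. First I would invoke Theorem~\ref{mainsimple} with $p'=2$, equivalently $p=2$: it asserts that $\{\loc : a\in L^{2}(\rdd)\}$ is norm-dense in $\sch^2$ \fif\ the Berezin transform $\B$ is one-to-one on $\sch^2$. Thus the whole corollary reduces to characterizing the injectivity of $\B$ on $\sch^2$ in terms of the windows $\varphi_1,\varphi_2$.

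For that characterization I would use \emph{both} halves of Proposition~\ref{inj12}, which is possible precisely because the value $p=2$ lies simultaneously in the range $1\leq p\leq 2$ of part (i) and in the range $p\geq 2$ of part (ii). Part (i) at $p=2$ supplies one implication: if $V_{\varphi_1}\varphi_2(z)\neq 0$ for a.e.\ $z\in\rdd$, then $\B$ is one-to-one on $\sch^2$. Part (ii) at $p=2$ supplies the converse: if $\B$ is one-to-one on $\sch^2$, then $V_{\varphi_1}\varphi_2(z)\neq 0$ for a.e.\ $z$. Together these yield the equivalence
\[
\B \text{ is one-to-one on } \sch^2 \quad\Llra\quad V_{\varphi_1}\varphi_2(z)\neq 0 \text{ for a.e.\ } z\in\rdd.
\]

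It remains only to rewrite the non-vanishing condition in the form stated. By the very definition of the \stft\ we have $V_{\varphi_1}\varphi_2(z)=\langle \varphi_2,\pi(z)\varphi_1\rangle$, so the right-hand side above is exactly the hypothesis $\langle\varphi_2,\pi(z)\varphi_1\rangle\neq 0$ for almost all $z$. Chaining this with the reduction coming from Theorem~\ref{mainsimple} produces the desired biconditional. I do not expect any genuine obstacle here: the argument is a bookkeeping combination of results already in hand, and the one point deserving attention is the observation that the Hilbert--Schmidt exponent is the unique fixed point of $p\mapsto p'$ and falls into the overlap of the two parts of Proposition~\ref{inj12}, which is precisely what lets both directions of the equivalence go through at once.
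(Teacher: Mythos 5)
Your proposal is correct and follows essentially the same route as the paper: the paper's Theorem~\ref{mainsimpleb} proves exactly this statement by combining the adjointness/duality result (Theorem~\ref{T:berezin_adjoint}, which underlies Theorem~\ref{mainsimple}) with the characterization of injectivity of $\B$ on $\sch^2$ (Theorem~\ref{T:berezin_one_to_one}, which is the $p=2$ content of both halves of Proposition~\ref{inj12}). Your observation that $p=2$ is the fixed point of $p\mapsto p'$ and lies in the overlap of the two parts of Proposition~\ref{inj12} is precisely the reason the equivalence closes up at the Hilbert--Schmidt level.
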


For $p=\infty $, we will see that the range of $a\mapsto \loc $ from
$L^\infty (\rdd )$ to $B(\lrd )$ is dense in the weak operator
topology, but fails to be  norm-dense. In fact, for the Fourier transform $\cF $ we will show
(Theorem~\ref{T:l_infty_counterexample_fourier_transform}) that 
$$
\|\cF - A_a^{\varphi _1, \varphi _2}\|_{op} \geq 1 
$$
for all $a\in L^\infty (\rdd )$ and all $\varphi _1,\varphi _2\in \lrd
$. Therefore the class of localization operators cannot be norm-dense
in $B(\lrd )$.

 Theorem~\ref{mainsimple} and Proposition~\ref{inj12} are formulated  for $L^p $-symbols.
 For such symbols the corresponding mapping properties between symbols and
operators are easy to understand~\cite{bocogr04,wong02}. However,  in the context of time-frequency analysis 
a less restrictive calculus is available that also includes 
distributional symbols. In Sections~4 and~5   we will formulate more
technical  versions of Theorem~\ref{mainsimple} and Proposition~\ref{inj12} for distributional symbol
 classes.

 Theorem~\ref{mainsimple} and Proposition~\ref{inj12}, at least for the cases
$p=1$ and $p=\infty $,  have many predecessors  in complex analysis,
see, e.g., ~\cite{bergercoburn87,bergercoburn94,Eng92,Eng96,Eng06}. Perhaps the closest
results are due to Berger and Coburn who show that  Toeplitz operators on
Bargmann-Fock space with symbols of compact support are dense in the
trace class and in the compact operators~\cite[Thm.~9]{bergercoburn94}. 

Finally, Theorem~\ref{mainsimple}  can also  be formulated in a  more
abstract context, where one needs only a resolution of the identity
$\mathrm{Id} = \int _X P(x) d\mu (x)$ with rank-one operators
$P(x)$. This formalism was introduced by B.\ Simon~\cite{Sim80} and some
special cases of Theorem~\ref{mainsimple} could be attributed to him.
We emphasize that in the context of \tfa\ we prove a much sharper
result  for distributional symbol classes that cannot be formulated in
the axiomatic setting.

Perhaps the main surprise of
Theorem~\ref{mainsimple} is the explicit dependance of the density
property on the properties  of the window functions. This is a new
aspect in our approach, because  window
functions do not even occur in the definition of classical  Toeplitz operators. The
analysis of the interaction between window functions and the
properties of \tf\ localization operators is the main novelty of this
paper and requires tools that are specific for \tfa .

The paper is organized as follows:   In Section~2 we collect the
necessary prerequisites from \tfa , in particular, we recall the
definition of \modsp s and some facts from harmonic analysis, and   we  summarize the
mapping properties of localization operators. In Section~3 we
introduce the Berezin transform and study its mapping properties. In
Section~4 we investigate the kernel of the Berezin transform.  Finally,  in
Section~5 we  study the approximation of arbitrary  operators  by \tf\
localization operators and prove the  theorems discussed in the
introduction. 


\section{Prerequisites}


\subsection{Short-Time Fourier Transform, Wigner Distribution and Modulation Spaces}

Let $x,\omega \in\rd$ and $z = (x,\omega)\in\rdd$. We define the
translation operator on $\ld$ as $T_xf(t) := f(t-x)$ 
and the modulation operator as
$M_\omega f(t) := e^{2\pi i \omega\cdot t}f(t)$, for $f \in \ld$.
The time-frequency shift $\pi(z)$ is then given as
\[
\pi(z)f(t) :=
\pi(x,\omega)f(t) := M_\omega T_xf(t) = e^{2\pi i \omega \cdot t}f(t-x).
\]
The \tfs s  are unitary operators on the Hilbert space $\ld$ and
isometries on $L^p(\rd )$. 

We frequently use the following bilinear time-frequency distributions.

The short-time Fourier transform of $f$ with respect to a  window $g$ (STFT) is defined as
\[
V_gf(x,\omega) := \left< f, M_{\omega}T_xg \right> = \int_{\rd} f(t)\overline{g(t - x)}e^{-2\pi i \omega \cdot t}\,dt
\]
for $x,\omega \in \rd$.

We list some standard properties of the \stft .

\begin{lemma} \label{l:prop}
  Let $f,g\in \lrd $. Then the following holds: \\
(i) Covariance property: $|V_g(\pi (w)f)(z)| = |V_gf(z-w)|$ \\
(ii) Isometry property: $V_gf $ is in $\lrdd $ and $\|V_gf\|_2 =
\|f\|_2 \|g\|_2$. \\
(iii) Let $\cJ (z_1,z_2) = (z_2, -z_1)$ be the standard symplectic form on
$\rdd $, then  $$
 V_{\pi (z) g} (\pi (z) f)(w)  =   e^{2\pi i \cJ z
  \cdot w} V_{g} f(w) \, \qquad w,z \in \rdd \, .
$$ 
\end{lemma}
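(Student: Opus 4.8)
The plan is to obtain (i) and (iii) from the composition relation for \tfs s, and to deduce (ii) from Plancherel's theorem combined with Tonelli's theorem. The one identity I would record first is that, from $T_xM_\omega = e^{-2\pi i \omega\cdot x}M_\omega T_x$, one gets
\[
\pi(z_1)\pi(z_2) = e^{-2\pi i \omega_2\cdot x_1}\,\pi(z_1+z_2), \qquad z_j=(x_j,\omega_j)\in\rdd,
\]
and, by taking $z_2=-z_1$, the adjoint formula $\pi(z)^\ast = e^{-2\pi i \omega\cdot x}\pi(-z)$ for $z=(x,\omega)$. For (i) I would then write $V_g(\pi(w)f)(z)=\langle \pi(w)f,\pi(z)g\rangle=\langle f,\pi(w)^\ast\pi(z)g\rangle$ and use these two formulas to rewrite $\pi(w)^\ast\pi(z)$ as a unimodular scalar times $\pi(z-w)$. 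Thus $V_g(\pi(w)f)(z)$ equals that scalar times $\langle f,\pi(z-w)g\rangle=V_gf(z-w)$, and passing to absolute values removes the phase and gives the covariance identity.

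Statement (iii) rests on the same mechanism, but now the residual phase \emph{is} the content of the assertion, so the only delicate point is the bookkeeping of the four exponential factors. Rather than chaining adjoints, where a sign is easily lost, I would compute straight from the integral definition: expanding
\[
V_{\pi(z)g}(\pi(z)f)(w)=\int_{\rd}\pi(z)f(t)\,\overline{\pi(w)\pi(z)g(t)}\,dt
\]
and substituting $s=t-x$ (with $z=(x,\omega)$ and $w=(u,\eta)$) collapses the exponentials to a single factor $e^{2\pi i(\omega\cdot u-x\cdot\eta)}$ in front of $\int_{\rd} e^{-2\pi i \eta\cdot s}f(s)\overline{g(s-u)}\,ds=V_gf(w)$. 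Since $\cJ z=(\omega,-x)$, that factor is exactly $e^{2\pi i\,\cJ z\cdot w}$, which is (iii).

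For the isometry (ii), the diagonal case of Moyal's orthogonality relation, I would view $\omega\mapsto V_gf(x,\omega)$ for fixed $x$ as the Fourier transform of $h_x(t):=f(t)\overline{g(t-x)}$. The logical order that avoids any trouble is as follows: first, by Tonelli (the integrand is nonnegative, so only measurability is needed), $\int_{\rd}\!\int_{\rd}|f(t)|^2|g(t-x)|^2\,dt\,dx=\|f\|_2^2\|g\|_2^2<\infty$, and this forces $h_x\in\lrd$ for almost every $x$. For those $x$, Plancherel gives $\int_{\rd}|V_gf(x,\omega)|^2\,d\omega=\int_{\rd}|f(t)|^2|g(t-x)|^2\,dt$; integrating in $x$ and interchanging the order once more by Tonelli recovers $\|V_gf\|_2^2=\|f\|_2^2\|g\|_2^2$, which simultaneously shows $V_gf\in\lrdd$. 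The main obstacle in the whole lemma is precisely this measure-theoretic interchange together with the sign tracking in (iii); neither requires a new idea, but both must be carried out in the order above, so that Plancherel is invoked only on the fibers where $h_x$ is square-integrable.
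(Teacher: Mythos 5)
Your proposal is correct, and all three verifications go through: the composition law $\pi(z_1)\pi(z_2)=e^{-2\pi i\omega_2\cdot x_1}\pi(z_1+z_2)$ and the resulting adjoint formula give (i) after taking absolute values; the direct integral computation for (iii) produces exactly the phase $e^{2\pi i(\omega\cdot u-x\cdot\eta)}=e^{2\pi i\,\cJ z\cdot w}$; and the Tonelli--Plancherel--Tonelli order of operations in (ii) is the careful way to prove the diagonal Moyal identity for general $f,g\in\lrd$, since $h_x=f\cdot\overline{T_xg}$ is a priori only in $L^1(\rd)$ and lies in $\lrd$ merely for almost every $x$. The paper itself offers no proof of this lemma --- it is listed as a collection of standard STFT properties with the details deferred to the literature --- so there is no alternative argument to compare against; your write-up is precisely the standard one the paper implicitly relies on, and the only point worth adding is the (easy) remark that $V_gf$ is continuous, hence jointly measurable, which licenses the final Tonelli interchange.
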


For the representation of operators we will also need the Wigner
distribution. 
 The (cross) Wigner distribution of $f\in \ld $ and $g\in \ld
 $ is defined to be
\[
W(f,g)(x,\omega) := \int_{\rd} f(x + \tfrac{t}{2})\overline{g(x - \tfrac{t}{2})}e^{-2\pi i \omega \cdot t}\,dt
\]
for $x, \omega \in \rd$.

The following formula connects the Wigner distribution with the STFT
via the Fourier transform. 
Let $f,g \in \ld$. Then 
\begin{align} \label{L:ftwig}
\widehat{W(f,g)}(x,\omega ) = e^{-\pi i x \cdot \omega} \, 
V_gf(-\omega, x), \quad\quad \mbox{for all } x,\omega \in \rd.
\end{align}

\begin{definition}[Modulation Spaces]\label{D:mod_sp}
Let $g\in\swrz(\rd)$ be a  fixed non-zero Schwartz function, and $1\leq p,q\leq \infty$.
We define the modulation space
\[ M^{p,q}(\rd) := \{ f\in\temp(\rd):
V_gf  \in L^{p,q}(\rdd)\}.\]
The norm on $M^{p,q}$ is given by 
$$
\|f\|_{M^{p,q}} = \|V_gf \|_{L^{p,q}} := \left(
\int_{\rd}\left(\int_{\rd}|V_gf (x,\omega )|^p\,dx
\right)^{q/p}\,d\omega \right)^{1/q}
$$
with modifications if $p=\infty $ or $q=\infty $. We write $M^p =
M^{p,p}$.  
\end{definition}

The theory of modulation spaces is explained in detail in the
monographs \cite{gr01} and \cite{fest98}.
 We mention only the following properties:
Every   modulation space $M^{p,q}$ is a  Banach space. Its  definition does not
 depend on the chosen window function $g$; different windows $g\in \cS
 (\rd )$ yield  equivalent norms~\cite{gr01}.
The duality of \modsp s is  analogous to that of  Lebesgue spaces: If  $1\leq p,q < \infty$,  then the  dual space of $M^{p,q}$
is given by $M^{p',q'}$, where $p' = \frac{p}{p-1}$ is the conjugate
index of $p$. 
Unlike the Lebesgue spaces $L^p(\rd)$, modulation spaces are embedded into each other: $\mathcal{S}\subseteq M^1 \subseteq M^p \subseteq M^q \subseteq M^\infty
\subseteq \mathcal{S}' $ for $1 \leq p \leq q \leq
\infty$. Furthermore, 
$$
M^{p,1} \subseteq L^p \subseteq
M^{p,\infty}$$
 for all $1 \leq p \leq \infty$.\\

For further reference we note the following smoothness properties of
STFTs~\cite{gr01,cogr03}. If $f,g\in \cS (\rd )$, then $V_gf
\in \cS (\rdd )$; if $f,g \in M^1(\rd )$, then $V_gf \in M^1(\rdd
)$.  \\
Finally, the we will use the  following
convolution~\cite{cogr03,Toft04}: $M^\infty \ast M^1 \subseteq \mif $
with a norm estimate 
\begin{equation}
  \label{eq:c1}
\|f \ast g \|_{\mif } \leq C \|f\|_{M^\infty }  \, 
\|g\|_{M^1} \, .  
\end{equation}


\subsection{Compact Operators and Schatten Classes}

Let $T:H\to H$ be a compact and self-adjoint operator on a Hilbert
space $H$. Then there exists a 
sequence of eigenvalues $(\lambda_j)_{j\in\N}\subseteq \bR $  and an orthonormal
set of eigenvectors  $(\phi_j)_{j\in\N} \subseteq H$ such that
$\lim_{j\to\infty} \lambda_j = 0$, $T\phi_j = \lambda_j\phi_j$ for all
$j\in\N$, 
and
\[
Tf = \sum_{j=1}^{\infty}\lambda_j\left<
f,\phi_j\right>\phi_j
\]
for all $f\in H$, with convergence of the
series in the norm of $H$.
 The singular values $s_j(T)$ of a compact operator $T:H\to H$
are the square-roots of the eigenvalues of the (compact
self-adjoint positive) operator $T^*T$. We write
\[ s_j(T) := \lambda_j(T^*T)^{1/2}, \]
 and assume that $s_j(T) \geq s_{j+1}(T)$. 
If $(s_j(T))_{j\in\N} \in \ell^p(\N)$ for some $1 \leq p < \infty$,
then  $T:H\to H$ belongs to the Schatten 
$p$-class $\mathscr{S}^p$.   Equipped with the norm $\| T
\|_{\mathscr{S}^p} = \big(\sum_j s_j(T)^p \big)^{1/p}$, $\mathscr{S}^p$ is a Banach space for $1 \leq p
< \infty$. The space  $\mathscr{S}^1$ consists of the  trace class
operators, the space  $\mathscr{S}^2$ consists of all Hilbert-Schmidt
operators. For $p = \infty$, we define $\cS^{\infty}$ to be the set of
all compact operators. Then we have the duality relation
$(\mathscr{S}^{p})^{\ast} = \mathscr{S}^{p^{\prime}}$, where $p' =
\frac{p}{p-1}$ is the conjugate 
index of $p$, $1 \leq p < \infty$.

\subsection{Localization Operators}

 A \tf\ localization operator  can be interpreted as  a multiplier
 for the short-time Fourier transform. First we analyze 
a given function $f$  on $\rd$, a "signal" in engineering language,
by  taking its short-time Fourier transform with window
$\varphi_1$. This yields a time-frequency representation of $f$ on
$\rd \times \rd$. Then we  extract the  interesting parts of this 
representation by
multiplying the STFT  with a suitable "mask", the so-called symbol, a function
$a$ on $\rd \times \rd$. Finally, we  synthesize a new signal by
applying the adjoint of the  short-time Fourier transform (possibly with some other window
$\varphi_2$). 
This algorithm of time-frequency filtering leads to the formal
definition of the localization operator:
\begin{align*}
A_a ^{\vf _1, \vf _2} f =  V_{\varphi_2}^*(a\cdot V_{\varphi_1}f) =
\int_{\rdd} a(z)\cdot
V_{\varphi_1}f(z)\,\pi (z) \varphi _2  \, dz \, . 
\end{align*}
In proofs it will be convenient to use the weak definition of a
localization operator. If $\varphi _1, \varphi _2 , f,g \in \lrd $, then 
\begin{equation}
  \label{eq:c2}
  \langle \loc f  , g \rangle = \langle a , \overline{V_{\varphi _1}f}
  \, 
  V_{\varphi _2}g \rangle \, . 
\end{equation}
Here the first bracket is  the inner product on $\lrd $,
whereas  the second bracket is on $\lrdd $.  By extending the inner
product to a duality bracket, one can define  a
localization operator conveniently on many other function spaces. For
example, \eqref{eq:c2} makes sense, when $a \in \cS ' (\rdd )$ and
$\varphi _1, \varphi _2, f,g \in \cS (\rd )$, consequently, the
localization operator 
$\loc $ 
defines a continuous operator $\loc $ from $\cS (\rd )$ to $\cS '
(\rd )$. As a special case  we mention that  for  $a \in M^\infty (\rdd )$ and $\varphi _1, \varphi
_2\in M^1(\rd )$ the operator $\loc $ is bounded on all \modsp s
$M^{p,q}(\rd ), 1\leq p,q \leq \infty$~\cite{cogr03}.

The following table summarizes the basic  mapping properties of
the correspondence  $\mathcal{A}:a \mapsto \loc$ from symbols  to
localization operators. In the result for compact operators $L^0(\rdd
)$ denotes the subspace of compactly supported functions in $L^\infty
(\rdd )$. For proofs see the given references.

\begin{table}[h]
\begin{center}
\begin{tabular}{| c | c | c | c |}
\hline
\textbf{Symbol} & \textbf{Windows} & \textbf{Localization Operator} &  \textbf{Ref.} \\ \hline\hline
$\temp(\rdd)$ & $\swrz(\rd)$ & $\Psi$DO $\swrz(\rd) \to \temp(\rd)$ & \cite{da88} \\ \hline
$L^\infty(\rdd)$ & $\ld$ & $B(\ld)$ & \cite{wong02} \\ \hline
$L^0(\rdd)$ & $\ld$ & $\sch^\infty(\ld)$ & \cite{FG06} \\ \hline
$L^{p}(\rdd)$, $1\le p < \infty$ & $\ld$ & $\sch^p(\ld)$ & \cite{bocogr04} \\ \hline
$M^{\infty,\infty}(\rdd)$ & $M^1(\rd)$ & $B(M^{p,q}(\rd))$, $1\le p,q
\le \infty$ & \cite{cogr03}  \\ \hline 
$M^{p,\infty}(\rdd)$, $1\le p < \infty$ & $M^1(\rd)$ & $\sch^p(\ld)$ &
\cite{cogr03} \\ \hline
\end{tabular}
\end{center}
\caption{Localization operators with different symbols and windows}
\end{table}

We mention that the symbol class $M^{p,\infty }$ is significantly
larger than just $L^p$. For example,  the \modsp\ $M^{p,\infty }$
contains discrete measures of the form $a = \sum _j c_j \delta
_{z_j}$, 
where the  coefficient sequence  $c$ is in  $ \ell ^p$ and $\{ z_j \}
$ is a uniformly discrete subset of $\rdd $. In particular, with the
symbol class $M^{p,\infty }$ one can treat many aspects of the theory
of so-called Gabor multipliers that was initiated in~\cite{FN03}.


\subsection{Kernel Theorems}

For some technical arguments we will  use the \psdo\
representation  of an operator. We state explicitly two representation
theorems that use the Wigner distribution $W(f,g)$   of $f$ and
$g$.

\begin{proposition}\label{T:kernel_hs}
(i) Pool's Theorem \cite{po66}: If $T\in\mathscr{S}^2(\ld)$,  then
there exists a unique symbol $\sigma\in\ldd$ such that  \[ \left<Tf,g\right> = \left<\sigma, W(g,f)\right>
\]
for all $f,g \in\ld$.

(ii) \label{T:kernel_modulation} Kernel theorem for \modsp s~\cite{fegr97,gr01}: Let $T:M^1(\rd)\to M^{\infty}(\rd)$ be a bounded linear operator. Then there exists a unique $\sigma \in M^{\infty}(\rdd)$ such that
\begin{align*}
\left< Tf, g \right> = \left< \sigma, W(g,f) \right>
\end{align*}
for all $f,g \in M^1(\rd)$. 
\end{proposition}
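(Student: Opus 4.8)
The plan is to handle both parts by one common template: first represent $T$ through its tensor-product integral kernel $K$, and then pass from $K$ to the Wigner symbol $\sigma$ via the fixed linear transformation that turns $g\otimes\overline{f}$ into $W(g,f)$. Concretely, the substitution $(x,t)\mapsto(x+t/2,\,x-t/2)$ has Jacobian of modulus one, so the coordinate change $\mathcal{T}$ given by $\mathcal{T}F(x,t)=F(x+t/2,\,x-t/2)$ together with the partial Fourier transform $\mathcal{F}_2$ in the $t$-variable yields $W(g,f)=\mathcal{F}_2\mathcal{T}(g\otimes\overline{f})$, as one checks directly from the definition of the Wigner distribution. Writing $\mathcal{U}=\mathcal{F}_2\mathcal{T}$, so that $g\otimes\overline{f}=\mathcal{U}^{-1}W(g,f)$, and recalling the kernel identity $\langle Tf,g\rangle=\langle K,g\otimes\overline{f}\rangle$, the asserted formula is equivalent to $\langle Tf,g\rangle=\langle \mathcal{U}^{-1}\sigma,\,g\otimes\overline{f}\rangle$; hence the candidate symbol is $\sigma=\mathcal{U}K$.

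For part (i) I would start from the classical Hilbert--Schmidt kernel theorem: each $T\in\mathscr{S}^2$ has a unique kernel $K\in\ldd$ with $\langle Tf,g\rangle=\langle K,g\otimes\overline{f}\rangle$, and $T\mapsto K$ is unitary. Both $\mathcal{T}$ (the substitution preserves Lebesgue measure) and $\mathcal{F}_2$ (Plancherel in one variable) are unitary on $\ldd$, hence so is $\mathcal{U}$. Setting $\sigma:=\mathcal{U}K\in\ldd$ and using $\mathcal{U}^{-1}=\mathcal{U}^{\ast}$, we get $\langle Tf,g\rangle=\langle K,\mathcal{U}^{-1}W(g,f)\rangle=\langle \mathcal{U}K,W(g,f)\rangle=\langle\sigma,W(g,f)\rangle$, together with the norm identity $\|\sigma\|_2=\|T\|_{\mathscr{S}^2}$ for free. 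Uniqueness follows because the finite span of the functions $g\otimes\overline{f}$ is dense in $\ldd$ and $\mathcal{U}$ is unitary, so the span of the $W(g,f)$ is dense as well; thus $\langle\sigma,W(g,f)\rangle=0$ for all $f,g$ forces $\sigma=0$.

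For part (ii) the scheme is identical, with $\ldd$ replaced by the duality $(M^1(\rdd))'=M^\infty(\rdd)$. Here I would invoke the kernel theorem for modulation spaces in its tensor-product form: a bounded $T\colon M^1(\rd)\to M^\infty(\rd)$ corresponds to a unique $K\in M^\infty(\rdd)$ with $\langle Tf,g\rangle=\langle K,g\otimes\overline{f}\rangle$, the pairing now being the $(M^1,M^\infty)$ duality and using that $g\otimes\overline{f}\in M^1(\rdd)$ whenever $f,g\in M^1(\rd)$ (tensor stability of Feichtinger's algebra, together with the invariance of $M^1$ under complex conjugation). One then transfers to the Wigner side exactly as before, defining $\sigma:=\mathcal{U}K$, and uniqueness descends from uniqueness of $K$ and invertibility of $\mathcal{U}$.

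The crux, and the only place where the two parts genuinely differ, is the mapping behaviour of $\mathcal{U}=\mathcal{F}_2\mathcal{T}$. On $\ldd$ this is trivial. On $M^\infty(\rdd)$ it is the heart of the matter: I would argue that $\mathcal{T}$, being a linear change of variables by a matrix in $GL(2d,\mathbb{R})$, and $\mathcal{F}_2$, being a partial Fourier transform, both preserve $M^1(\rdd)$ (standard invariance of Feichtinger's algebra under metaplectic and coordinate transformations), and therefore preserve $M^\infty(\rdd)$ by duality; hence $\mathcal{U}$ is a topological isomorphism of $M^\infty(\rdd)$, and the pairing identity $\langle K,\mathcal{U}^{-1}h\rangle=\langle\mathcal{U}K,h\rangle$ persists by continuity from the $L^2$ case. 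The same invariance shows $W(g,f)=\mathcal{U}(g\otimes\overline{f})\in M^1(\rdd)$, so that the bracket $\langle\sigma,W(g,f)\rangle$ is well defined; alternatively this membership follows from \eqref{L:ftwig} and the stated fact that $V_gf\in M^1(\rdd)$. The remaining work is purely bookkeeping: keeping the conjugations in $g\otimes\overline{f}$ and in the sesquilinear pairings consistent throughout.
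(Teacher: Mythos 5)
The paper does not prove this proposition; it is quoted as a known result with references to \cite{po66} and \cite{fegr97,gr01}, and your argument is in substance the standard proof given there: reduce to the tensor-product kernel theorem (Hilbert--Schmidt, resp.\ Feichtinger) and conjugate by the unitary map $\mathcal{U}=\mathcal{F}_2\mathcal{T}$ that carries $g\otimes\overline{f}$ to $W(g,f)$, using that $\mathcal{U}$ is unitary on $\lrdd$ and an isomorphism of $M^1(\rdd)$ (hence of $M^\infty(\rdd)$ by duality). Your computation of $\mathcal{U}$, the identification $\sigma=\mathcal{U}K$, and both uniqueness arguments are correct, so the proposal is a valid proof along the expected lines.
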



\subsection{Density theorems}

Tauberian theorems address the question when the span of all
translates of a given function $f$ is dense in a space. It is
therefore not surprising that they can be applied with profit to other
density questions. We need three versions. 

\begin{proposition}
\label{T:tauber_l2} \label{T:tauber_classic} 
(i) $L^2$-version: Let $f\in\ld$. Then $\overline{\text{span}\, \{T_zf|\; z\in\rd\}} = \ld$
if and only if $\hat{f}(\omega) \not= 0$ for almost all
$\omega\in\rd$.

(ii) $L^1$-version: Let $f\in L^1(\rd)$. Then
$\overline{\text{span}\, \{T_zf|\; z\in\rd\}} = L^1(\rd)$, 
if and only if $\hat{f}(\omega) \not= 0$ for all
$\omega\in\rd$.

(iii) \label{T:tauber_m1} $M^1$-version: Let $f\in M^1(\rd)$. Then 
$\overline{\text{span} \, \{T_zf|\; z\in\rd\} } =  M^1(\rd)$  (where
the closure is taken with respect to the
$M^1$-norm),  
if and only if $\hat{f}(\omega) \not= 0$ for all
$\omega\in\rd$.
\end{proposition}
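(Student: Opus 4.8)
The three parts share a common structure: the closed span of the translates of $f$ is a translation-invariant closed subspace, and its density is governed entirely by the zero set of $\hf$. The plan is to dispatch the Hilbert-space case by a direct orthogonality computation and to reduce the $L^1$ and $M^1$ cases to the classical Wiener Tauberian theorem and its Segal-algebra analogue.

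For part (i), I would pass to the \ft\ and use that $\widehat{T_zf}=M_{-z}\hf$. A function $g\in\ld$ annihilates every translate $T_zf$ \fif\ $\int_{\rd}\hg(\omega)\overline{\hf(\omega)}\,e^{2\pi i z\cdot\omega}\,d\omega=0$ for all $z\in\rd$, and by injectivity of the \ft\ this forces $\hg\,\overline{\hf}=0$ almost everywhere. If $\hf\neq0$ a.e.\ we conclude $\hg=0$, hence $g=0$, so the span is dense; conversely, if $\hf$ vanishes on a set $E$ of positive measure, any nonzero $g$ with $\hg$ supported in $E$ annihilates all translates, so the span is not dense. This is a short Plancherel argument with no real difficulty.

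For parts (ii) and (iii) the easy (only-if) direction is identical in spirit: for $f\in L^1$ or $f\in M^1$ the transform $\hf$ is continuous (recall $M^1\subseteq L^1$), so if $\hf(\omega_0)=0$ then the bounded functional $h\mapsto\hat h(\omega_0)$ annihilates $f$ and, since $\widehat{T_zf}(\omega_0)=e^{-2\pi i z\cdot\omega_0}\hf(\omega_0)=0$, all of its translates, whence the span cannot be dense. For the hard (if) direction I would identify the closed span of translates with the closed ideal generated by $f$ in the convolution algebra—using an approximate identity to pass between translates $T_zf=\delta_z\ast f$ and the convolution products $h\ast f=\int h(z)T_zf\,dz$—and then invoke Wiener's theorem: in $L^1(\rd)$ this closed ideal is the whole algebra \fif\ its hull, the common zero set of the Fourier transforms in the ideal, which here equals exactly $\{\omega:\hf(\omega)=0\}$, is empty. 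For part (iii) the same scheme applies once one records that $M^1$ is a Segal algebra, i.e.\ a translation-invariant Banach convolution algebra densely and continuously embedded in $L^1$ with an approximate identity; the Tauberian theorem for Segal algebras then reduces to the $L^1$ statement, because a proper closed ideal of $M^1$ with empty hull would, after closing in $L^1$, yield a proper closed ideal of $L^1$ with empty hull, contradicting Wiener.

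The main obstacle is the nontrivial direction of Wiener's theorem—that $\hf$ having no zeros implies density—which rests on the regular and Tauberian structure of the group algebra and which I would cite rather than reprove. The only genuinely extra work is the reduction for $M^1$, where one must verify that passing to the $L^1$-closure does not enlarge the hull, equivalently that $M^1$ behaves as a Tauberian Segal algebra.
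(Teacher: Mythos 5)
Your proposal is correct, and it follows exactly the route the paper itself takes: the paper offers no proof of this proposition but cites it as standard, attributing the $L^1$-case to Wiener's Tauberian theorem and the $M^1$-case to the Segal-algebra ideal theory of Feichtinger and Reiter (``the ideal theory in the Banach algebras $L^1$ and $M^1$ is the same''), which is precisely the reduction you sketch; the $L^2$-case is the elementary Plancherel orthogonality argument you give. The one point you rightly flag as the genuine content --- that the $L^1$-closure of a proper closed ideal of $M^1$ remains proper, i.e.\ that closed ideals of the Segal algebra correspond bijectively to closed ideals of $L^1$ --- is exactly what the cited references \cite{fe73,re71,rest00} supply.
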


The $L^1$ and $L^2$-versions are  standard, the $L^1$-version is one of
many famous theorems of Wiener~\cite{rest00}, the $M^1$-version is
contained in \cite{fe73,re71,rest00} (the ideal theory in
the Banach algebras $L^1$ and $M^1$ is the same). 

Let us emphasize that the obvious generalizations to $L^p(\rd )$ for $ p\neq
1,2,\infty $ are false. By a very deep result of Lev and Olevski~\cite{LO11} the density of $\overline{\text{span}\, \{T_zf|\;
  z\in\rd\}} $ in $L^p$  depends on subtle properties of $f$, and not
only on the measure of the zero set of $\hat{f}$. For this reason the
formulation of Proposition~\ref{inj12} is divided into a sufficient
condition and a slightly weaker necessary condition (rather than a
complete characterization).


\section{The Berezin Transform}

In this section we study a general Berezin transform for operators on
$\lrd $. The Berezin transform maps operators to functions, whereas
quantization rules map functions to operators.

\begin{definition}[Berezin Transform]\label{D:berezin}
Let $T\in B(\ld)$. Let $\varphi_1, \varphi_2 \in \ld$. The  Berezin
transform  $\mathcal{B}$ maps
$T$ to the function on $\rdd$ given by 
\[ \mathcal{B}T(z) := \left<T\pi(z)\varphi_2, \pi(z)\varphi_1
\right>, \quad\quad z\in\rdd.
\]
\end{definition}

\rems\  1.  The Berezin transform and its properties  depend on the 
window functions $\varphi_1, \varphi_2 \in \ld$, but we will simply
write $\B$ instead of  $\B^{\varphi_1, \varphi_2}$.  This practice
should not lead to any confusion.

2. If $\varphi _1, \varphi _2 \in \cS (\rd )$, then the Berezin transform
can be defined for all operators from $T: \cS (\rd ) \to \cS ' (\rd
)$. In the following we will derive several continuity properties of
the Berezin transform for different spaces of operators and windows.

For technical  arguments  we will   use  two alternative
representations of  the  Berezin transform.

\begin{lemma} \label{lem:repber}
   If $T$ is bounded from $M^1(\rd )$ to $M^\infty (\rd )$ and
  $\varphi _1, \varphi _2 \in M^1(\rd )$, then there exists a unique  symbol $\sigma
  \in M^\infty (\rdd )$, such that 
  \begin{equation}
    \label{eq:c3}
    \B T(z) = \langle \sigma , T_z W(\varphi _1, \varphi _2)\rangle \, .
  \end{equation}
If  $\varphi _1, \varphi _2 \in \lrd $ and $T \in \sch ^2$, then
\eqref{eq:c3} holds with  $\sigma \in \lrdd $. 
\end{lemma}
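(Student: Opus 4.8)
The plan is to reduce both assertions to the two kernel theorems of Proposition~\ref{T:kernel_hs}, combined with the covariance of the Wigner distribution under time-frequency shifts. First I would produce the symbol $\sigma$ directly from the appropriate kernel theorem. In the case $T\colon M^1(\rd)\to M^\infty(\rd)$ with $\varphi_1,\varphi_2\in M^1(\rd)$, Proposition~\ref{T:kernel_hs}(ii) supplies a unique $\sigma\in M^\infty(\rdd)$ with $\langle Tf,g\rangle=\langle\sigma,W(g,f)\rangle$ for all $f,g\in M^1(\rd)$; in the case $T\in\sch^2$ with $\varphi_1,\varphi_2\in\lrd$, Pool's theorem (Proposition~\ref{T:kernel_hs}(i)) gives a unique $\sigma\in\ldd$ satisfying the same identity for all $f,g\in\ld$. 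The uniqueness asserted in the lemma is then inherited verbatim from these kernel theorems, so no separate injectivity argument is needed.

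Next I would substitute $f=\pi(z)\varphi_2$ and $g=\pi(z)\varphi_1$ into the kernel identity. This is legitimate because time-frequency shifts map $M^1(\rd)$ into itself (case of $M^\infty$-symbols) and are unitary on $\ld$ (case of $\sch^2$), so the shifted windows lie in the domain of the relevant representation. With these choices the kernel identity reads $\B T(z)=\langle\sigma,W(\pi(z)\varphi_1,\pi(z)\varphi_2)\rangle$, where one must keep track of the order of the arguments in $W(g,f)$.

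The decisive step is the covariance identity $W(\pi(z)\varphi_1,\pi(z)\varphi_2)(w)=W(\varphi_1,\varphi_2)(w-z)=(T_zW(\varphi_1,\varphi_2))(w)$, which I would verify by a direct change of variables in the defining integral of the Wigner distribution: the two modulation phases combine into a single exponential that shifts the frequency variable, while the two translations shift the space variable, so that the simultaneous time-frequency shift of both arguments acts exactly as the ordinary phase-space translation $T_z$ on $\rdd$. Inserting this into the expression for $\B T(z)$ yields $\B T(z)=\langle\sigma,T_zW(\varphi_1,\varphi_2)\rangle$, which is the assertion.

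Finally I would confirm that all pairings are well defined, which is mainly bookkeeping rather than a genuine obstacle. For the $M^\infty$-case I need $W(\varphi_1,\varphi_2)\in M^1(\rdd)$: this follows from the smoothness property recorded in Section~2 (that $V_gf\in M^1$ whenever $f,g\in M^1$) together with the Fourier relation~\eqref{L:ftwig} and the invariance of $M^1$ under the Fourier transform, chirp multiplication, and linear changes of coordinates; since $T_z$ preserves $M^1(\rdd)$, the bracket against $\sigma\in M^\infty(\rdd)$ is the correct $M^1$--$M^\infty$ duality. For the $\sch^2$-case, the isometry property Lemma~\ref{l:prop}(ii) together with~\eqref{L:ftwig} gives $W(\varphi_1,\varphi_2)\in\ldd$, and $T_z$ is unitary on $\ldd$, so $\langle\sigma,T_zW(\varphi_1,\varphi_2)\rangle$ is an honest $\ldd$ inner product. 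The only point demanding care throughout is that the brackets appearing in the kernel theorems and in the Berezin formula be interpreted in the matching dual pairing.
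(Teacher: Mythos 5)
Your argument is correct and coincides with the paper's own proof: both obtain $\sigma$ from the kernel theorems of Proposition~\ref{T:kernel_hs}, substitute $f=\pi(z)\varphi_2$, $g=\pi(z)\varphi_1$, and conclude via the covariance $W(\pi(z)\varphi_1,\pi(z)\varphi_2)=T_zW(\varphi_1,\varphi_2)$. The extra checks you include (that $W(\varphi_1,\varphi_2)$ lies in $M^1(\rdd)$ resp.\ $\ldd$ so the dual pairings are legitimate) are sound and merely make explicit what the paper leaves implicit.
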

\begin{proof}
Let $\sigma $ be the kernel of $T$ in the \psdo\ representation of $T$
given in Theorem~\ref{T:kernel_hs}. If $T : M^1 (\rd )  \to M^\infty
(\rd )  $, then $\sigma \in
M^\infty (\rdd )$; if $T\in \sch ^2$, then $\sigma \in \lrdd
$.   The identity~\eqref{eq:c3} now follows from the  identity
  \begin{align*}
BT(z) & = \langle T \pi (z) \varphi _2 , \pi (z) \varphi _1 \rangle  =
\langle \sigma , W(\pi (z)\varphi _1, \pi (z) \varphi _2)\rangle \\
&= \langle \sigma , T_z W(\varphi _1, \varphi _2) \rangle \, , 
  \end{align*}
where in the last equality we have used the covariance property of the
Wigner distribution. 
\end{proof}

\begin{lemma}\label{L:berezin_representation}
Assume that  $(g_n)_{n\in\N}$ and $(h_n)_{n\in\N}$ are two
orthonormal systems  in $\ld$ and  that $(s_n)_{n\in\N} \in \ell ^\infty$
is  a bounded sequence of complex numbers.   
If 
\begin{equation}
Tf = \sum_{n\in\N} s_n \left< f, g_n \right>h_n \, ,  
\end{equation}
then
\begin{equation}
\B T(z) = \sum_{n\in\N} s_n V_{\varphi_1}h_n(z) \overline{V_{\varphi_2}g_n(z)}
\end{equation}
for every $z\in\rdd$.
\end{lemma}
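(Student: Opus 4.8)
The plan is to unwind the definition of the Berezin transform directly and to recognize the two inner products that appear as short-time Fourier transforms. First I would record that $T$ is a bounded operator: since $(g_n)$ is orthonormal, Bessel's inequality gives $\sum_n |\langle f, g_n\rangle|^2 \le \|f\|_2^2$, and because $(s_n)$ is bounded and $(h_n)$ is orthonormal, the series defining $Tf$ converges in $\ld$ with $\|Tf\|_2 \le \|s\|_\infty \|f\|_2$. In particular $\B T(z) = \langle T\pi(z)\varphi_2, \pi(z)\varphi_1\rangle$ is well defined for every $z\in\rdd$.

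Next, substituting $f = \pi(z)\varphi_2$ into the series for $T$ and using the continuity of the inner product in its first argument (the series converges in the $\ld$-norm), I would move the inner product inside the sum:
\[
\B T(z) = \left\langle \sum_{n} s_n \langle \pi(z)\varphi_2, g_n\rangle h_n, \pi(z)\varphi_1\right\rangle = \sum_{n} s_n \langle \pi(z)\varphi_2, g_n\rangle \langle h_n, \pi(z)\varphi_1\rangle.
\]
Now recall the STFT convention $V_g u(z) = \langle u, \pi(z)g\rangle$. This identifies $\langle h_n, \pi(z)\varphi_1\rangle = V_{\varphi_1}h_n(z)$ and, after taking a complex conjugate, $\langle \pi(z)\varphi_2, g_n\rangle = \overline{\langle g_n, \pi(z)\varphi_2\rangle} = \overline{V_{\varphi_2}g_n(z)}$. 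Substituting these two identities yields exactly the claimed formula $\B T(z) = \sum_n s_n V_{\varphi_1}h_n(z)\,\overline{V_{\varphi_2}g_n(z)}$.

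The only point requiring genuine care — and the step I expect to be the main (if modest) obstacle — is the legitimacy of interchanging the infinite sum with the inner product, together with the pointwise convergence of the resulting series. For the interchange it suffices that $\sum_{n} s_n \langle \pi(z)\varphi_2, g_n\rangle h_n$ converges in $\ld$, which follows from the boundedness of $(s_n)$ and Bessel's inequality applied to the orthonormal system $(g_n)$; continuity of the functional $\langle\,\cdot\,, \pi(z)\varphi_1\rangle$ then permits termwise evaluation. For absolute convergence of the final series I would again invoke Bessel's inequality for both systems: since $V_{\varphi_1}h_n(z) = \langle h_n, \pi(z)\varphi_1\rangle$ and $V_{\varphi_2}g_n(z) = \langle g_n, \pi(z)\varphi_2\rangle$ are the expansion coefficients of $\pi(z)\varphi_1$ and $\pi(z)\varphi_2$ in $(h_n)$ and $(g_n)$ respectively, we have $\sum_n |V_{\varphi_1}h_n(z)|^2 \le \|\varphi_1\|_2^2$ and $\sum_n |V_{\varphi_2}g_n(z)|^2 \le \|\varphi_2\|_2^2$. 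A Cauchy--Schwarz estimate then bounds the series by $\|s\|_\infty \|\varphi_1\|_2 \|\varphi_2\|_2$, so it converges absolutely for every $z\in\rdd$ and the identity holds pointwise.
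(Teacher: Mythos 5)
Your proof is correct and follows essentially the same route as the paper: substitute $f=\pi(z)\varphi_2$, pull the inner product inside the norm-convergent series, and identify the resulting brackets as STFTs. The extra care you take with Bessel's inequality and the Cauchy--Schwarz bound for absolute convergence is sound and only makes explicit what the paper leaves implicit.
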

\begin{proof}
Since $(g_n)$ and $(h_n)$ are orthonormal systems in $\lrd $, the series defining
$T$ converges in $\ld$ for every $f$ and $T$ is bounded on $\lrd $. 
We compute
\begin{align*}
\B T(z) & = \left< T \pi(z)\varphi_2, \pi(z)\varphi_1 \right> \\
& = \left< \sum_{n\in\N} s_n \left< \pi(z)\varphi_2, g_n \right>h_n, \pi(z)\varphi_1 \right> \\
& = \sum_{n\in\N} s_n \left< \pi(z)\varphi_2, g_n  \right> \left< h_n, \pi(z)\varphi_1 \right>\\
& = \sum_{n\in\N} s_n \overline{V_{\varphi_2}g_n(z)} V_{\varphi_1}h_n(z).
\end{align*}
\end{proof}

In particular, the assumptions of Lemma \ref{L:berezin_representation}
are  satisfied for compact  operators  and for operators belonging to
some Schatten $p$-class $\sch^p$, $1\le p \leq  \infty$, since every
$T\in \sch ^p$  possesses a  singular value decomposition
\begin{equation}
T = \sum_{n\in\N} s_n \left< \cdot , g_n \right>h_n
\end{equation}
that satisfies the assumptions of Lemma~\ref{L:berezin_representation}. 

\subsection{Boundedness of the Berezin Transform}

In the following we study the mapping properties of the Berezin
transform on the Schatten $p$-classes. 

\begin{theorem}\label{T:berezin_s_p}
Let $1\le p \le \infty$ and $\varphi _1,\varphi _2\in \ld $. \\
The Berezin transform is  bounded from $\sch^p$ to $L^p(\rdd)$ 
with operator norm
\[
\|\B \|_{\sch^p \to L^p} \le \|\varphi_1 \|_2\, \|\varphi_2 \|_2.
\]
If $T\in \sch ^\infty $, then $\B T$ is in $C_0(\rdd )$, the
continuous functions vanishing at infinity. 
\end{theorem}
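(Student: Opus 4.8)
The plan is to prove the norm estimate at the two endpoints $p=\infty$ and $p=1$ and then to interpolate. For $p=\infty$ the bound is immediate from the definition: since $\pi(z)$ is unitary on $\ld$, Cauchy--Schwarz gives
\begin{equation*}
\abs{\B T(z)} = \abs{\langle T\pi(z)\vf_2,\pi(z)\vf_1\rangle} \le \norm{T}_{op}\,\norm{\pi(z)\vf_2}_2\,\norm{\pi(z)\vf_1}_2 = \norm{T}_{\sch^\infty}\,\norm{\vf_1}_2\,\norm{\vf_2}_2
\end{equation*}
for every $z\in\rdd$, using $\norm{T}_{op}=\norm{T}_{\sch^\infty}$ for compact $T$. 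Taking the supremum over $z$ yields $\norm{\B T}_\infty\le\norm{\vf_1}_2\norm{\vf_2}_2\norm{T}_{\sch^\infty}$.

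For $p=1$ I would use the singular value decomposition $T=\sum_n s_n\langle\cdot,g_n\rangle h_n$ together with the representation of Lemma~\ref{L:berezin_representation}, namely $\B T(z)=\sum_n s_n V_{\vf_1}h_n(z)\,\overline{V_{\vf_2}g_n(z)}$. Integrating in $z$, applying the triangle inequality, and then Cauchy--Schwarz in $z$ termwise gives $\norm{\B T}_{L^1}\le\sum_n\abs{s_n}\,\norm{V_{\vf_1}h_n}_2\,\norm{V_{\vf_2}g_n}_2$. Because $(g_n)$ and $(h_n)$ are orthonormal, the isometry property of Lemma~\ref{l:prop}(ii) yields $\norm{V_{\vf_1}h_n}_2=\norm{\vf_1}_2$ and $\norm{V_{\vf_2}g_n}_2=\norm{\vf_2}_2$, so the right-hand side equals $\norm{\vf_1}_2\norm{\vf_2}_2\sum_n\abs{s_n}=\norm{\vf_1}_2\norm{\vf_2}_2\norm{T}_{\sch^1}$. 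With both endpoints established, complex interpolation of the couples $(\sch^1,\sch^\infty)$ and $(L^1,L^\infty)$ --- via $[\sch^1,\sch^\infty]_\theta=\sch^p$ and $[L^1,L^\infty]_\theta=L^p$ with $1/p=1-\theta$ --- produces the bound on $\sch^p$ for all intermediate $p$; here $\B$ is a single linear map defined consistently on the dense subspace $\sch^1$ of $\sch^\infty$, so the interpolation theorem applies.

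For the $C_0$ assertion I would argue by density starting from rank-one operators. If $T=\langle\cdot,g\rangle h$, then $\B T=V_{\vf_1}h\cdot\overline{V_{\vf_2}g}$. Each factor is continuous and vanishes at infinity: for Schwartz data $V_gf\in\cS(\rdd)\subset C_0(\rdd)$ by the smoothness property recorded in Section~2, and for general $L^2$ windows and functions $V_gf$ is a uniform limit of such Schwartz STFTs, since $\norm{V_gf}_\infty\le\norm{f}_2\norm{g}_2$ shows the map $(f,g)\mapsto V_gf$ is jointly continuous into $C_b(\rdd)$, and $C_0$ is closed under uniform limits. Hence $\B T$, a product of two bounded $C_0$ functions, lies in $C_0(\rdd)$, and by linearity so does $\B T$ for every finite-rank $T$. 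A general $T\in\sch^\infty$ is an operator-norm limit of finite-rank operators $T_n$; since $\B:\sch^\infty\to L^\infty$ is bounded by the first part, $\B T_n\to\B T$ uniformly, and as $C_0$ is a closed subspace of $L^\infty$ we conclude $\B T\in C_0(\rdd)$.

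I expect no serious obstacle: once Lemma~\ref{L:berezin_representation} and the STFT isometry are in place the endpoint estimates are routine, and the structure of the whole argument is dictated by the singular value expansion of $\B T$. The only two points that require a little care are identifying $(\sch^1,\sch^\infty)$ (with $\sch^\infty$ the compact operators) as the correct interpolation couple, and upgrading $V_gf$ from merely bounded and continuous to an element of $C_0$ via the Schwartz approximation just described.
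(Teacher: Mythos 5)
Your endpoint estimates and the interpolation step are correct and essentially identical to the paper's proof: the same Cauchy--Schwarz bound at $p=\infty$, the same use of the singular value decomposition together with Lemma~\ref{L:berezin_representation} and the STFT isometry at $p=1$, and the same complex interpolation of the couples $(\sch^1,\sch^\infty)$ and $(L^1,L^\infty)$. Where you genuinely diverge is the $C_0$ assertion. The paper argues directly on a general compact $T$: for $|z_n|\to\infty$ the Riemann--Lebesgue lemma for the STFT gives $\langle\psi,\pi(z_n)\phi\rangle\to 0$, so $\pi(z_n)\phi\rightharpoonup 0$ weakly, compactness of $T$ upgrades this to $T\pi(z_n)\vf_2\to 0$ in norm, and hence $\B T(z_n)\to 0$. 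You instead verify the claim on rank-one operators, where $\B T=V_{\vf_1}h\cdot\overline{V_{\vf_2}g}$ is a product of two $C_0$ factors (obtained by Schwartz approximation and the uniform bound $\|V_gf\|_\infty\le\|f\|_2\|g\|_2$), and then pass to general compact $T$ by operator-norm density of finite-rank operators, using the already established bound $\|\B\|_{\sch^\infty\to L^\infty}\le\|\vf_1\|_2\|\vf_2\|_2$ and the fact that $C_0$ is uniformly closed. Both routes are valid and rest on the same underlying fact (that STFTs of $L^2$ functions vanish at infinity); yours has the mild advantage of making the $C_0$ conclusion an explicit consequence of the boundedness statement already proved, at the cost of re-deriving the Riemann--Lebesgue property for the STFT rather than quoting it. No gaps.
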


\begin{proof}
We prove the boundedness for $p=1$ and $p=\infty $ and then use
interpolation. 

(i) Case $p=\infty$: If $T\in B(\ld )$, then 
\begin{align*}
|\mathcal{B}T(z)| &= | \left<T\pi(z)\varphi_2, \pi(z)\varphi_1
\right>| \\
&\le \|T\|_{B(L^2)}\, \|\pi(z)\varphi_2\|_2\, \|
\pi(z)\varphi_1\|_2 \\
&= \|T\|_{B(L^2)}\, \|\varphi_2\|_2\, \|\varphi_1\|_2
\end{align*}
for all $z\in\rdd$, hence $\B$  is  bounded from  $B(L^2)$ to $\li$ with
\[
\| \B \|_{B(L^2)\to L^{\infty}} \le \|\varphi_1 \|_2\, \|\varphi_2 \|_2.
\]
The function $\B T$ is continuous since  the mapping $z\mapsto \pi(z)\varphi$ is continuous
from $\rdd$ to $\ld$ for arbitrary
$\varphi\in\ld$. 

Next assume that  $T$ is a compact operator 
and $\lim _{n\to \infty } |z_n | = \infty$ for a sequence  $(z_n
)_{n\in \bN } \subset 
\rdd $. 
Then, for arbitrary $\phi, \psi \in \ld$,
\[
\langle \psi, \pi(z_n)\phi \rangle = V_{\phi}\psi(z_n) \longrightarrow 0 = \langle \psi, 0 \rangle
\]
by the version of the Riemann-Lebesgue Lemma for the STFT. Thus
$\pi(z_n)\phi $  converges weakly to $0$. Since $T$ is compact, 
the sequence $T (\pi(z_n)\phi_1) $ converges to $0$  in the $L^2$-norm. But then
\[
\B T(z_n) = \langle T\pi(z_n)\phi_1, \pi(z_n)\phi_2 \rangle \longrightarrow 0
\]
for $n \to \infty$. Consequently, $\mathcal{B} T \in C_0(\rdd )$. 

(ii) Case $p=1$: 
If  $T\in\tc$ is  trace class,  then $T$ possesses
a spectral representation 
\[ Tf = \sum_k s_k \left<f,g_k \right> h_k, \quad\quad f\in\ld,
\]
with two  orthonormal systems $(g_k)_{k\in\N}, (h_k)_{k\in\N}$, and singular values $(s_k)_{k\in\N} \in \ell
^1$, such that   $\sum_{k\in\N} s_k = \|T\|_{\mathscr{S}^1} <
\infty$. Then by  Lemma \ref{L:berezin_representation}
\[
\B T (z) = \sum_k s_k V_{\varphi_1}h_k(z) \overline{V_{\varphi_2}g_k(z)}.
\]
Since  $V_{\varphi_1}h_k$ and $V_{\varphi_2}g_k$ are in $\ldd$
by~Lemma~\ref{l:prop}(ii),  the product
$V_{\varphi_1}h_k\overline{V_{\varphi_2}g_k}$ is in
$L^1(\rdd)$. Therefore the 
above series converges absolutely in $L^1(\rdd)$ with 
\begin{align*}
\|\mathcal{B}T\|_1 & =  \|\sum_k s_k
V_{\varphi_1}h_k \overline{V_{\varphi_2}g_k} \|_1 
 \leq \sum_k s_k \|V_{\varphi_1}h_k
\overline{V_{\varphi_2}g_k} \|_1 \\
& \leq \sum_k s_k
\|V_{\varphi_1}h_k\|_2 \|V_{\varphi_2}g_k \|_2  = \|\varphi_1\|_2\, \|\varphi_2\|_2 \sum_k s_k \\
& = \|\varphi_1\|_2\, \|\varphi_2\|_2 \, \|T\|_{\mathscr{S}^1} <
\infty.
\end{align*}
Hence $\B T \in L^1(\rdd)$, and
$\B:\tc \to L^1(\rdd)$ is bounded. 

(iii) Interpolation: We  use  the preceding steps  as the endpoints
for  complex interpolation with $L^p = [L^1,L^{\infty} ]_{\theta}$ and
$\sch^p = [ \tc, \sch^{\infty} ]_{\theta} = [ \tc, B(L^2)
]_{\theta}$. The norm estimate follows from 
\begin{align*}
\|\B \|_{\sch^p \to L^p} & \le \|\B \|_{\tc \to L^1}^{\theta} \|\B \|_{\sch^{\infty} \to L^{\infty}}^{1-\theta} \\
& \le \left( \| \varphi_1 \|_2\, \| \varphi_2 \|_2 \right)^{\theta} \left( \| \varphi_1 \|_2\, \| \varphi_2 \|_2 \right)^{1 - \theta} \\
& =  \| \varphi_1 \|_2\, \| \varphi_2 \|_2.
\end{align*}
\end{proof}

Next we consider the Berezin transform with windows in the modulation
space $M^1(\rd)$ and strengthen  the preceding results. 
We first  recall an important local property of \stft s from
~\cite{cogr03} (estimate (23) in  Cor.~4.2). 
\begin{lemma} \label{localstft}
 If $\varphi _1, \varphi _2 \in M^1(\rd ) $ and $g,h \in \lrd $, then the
 product of STFTs $V_{\varphi _1} g\,  \overline{V_{\varphi _2} h }$ is in
 $M^1(\rdd )$ and satisfies the estimate
 \begin{equation}
   \label{eq:c5}
   \|V_{\varphi _1} g \overline{V_{\varphi _2} h } \|_{M^1} \leq C \|\varphi
   _1\|_{M^1}\|\varphi _2\|_{M^1} \|g\|_2 \|h\|_2 \, .
 \end{equation}
\end{lemma}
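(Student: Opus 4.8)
The plan is to realize the product $V_{\varphi_1}g\,\overline{V_{\varphi_2}h}$ as a pointwise product of two functions in a Wiener amalgam space and to exploit the pointwise algebra structure of that space. Recall the amalgam space $W(\mathcal{F}L^1,L^p)(\rdd)$, whose local component is the Fourier algebra $\mathcal{F}L^1$ and whose global component is $L^p$; in the case $p=1$ one has the identification $W(\mathcal{F}L^1,L^1)(\rdd)=M^1(\rdd)$ with equivalent norms (see \cite{gr01}).

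The crucial input is the local regularity of the STFT produced by an $M^1$ window: if $\varphi\in M^1(\rd)$ and $g\in\lrd$, then
\[
V_\varphi g\in W(\mathcal{F}L^1,L^2)(\rdd),\qquad \|V_\varphi g\|_{W(\mathcal{F}L^1,L^2)}\le C\,\|\varphi\|_{M^1}\|g\|_2.
\]
Here the global $L^2$ bound is just the isometry of Lemma~\ref{l:prop}(ii), while the local $\mathcal{F}L^1$ smoothness is exactly the gain one extracts from $\varphi\in M^1$. I expect this to be the main obstacle: the remaining steps are formal, whereas establishing uniform local $\mathcal{F}L^1$ regularity of $V_\varphi g$ from the single hypothesis $\varphi\in M^1$ is the genuine analytic content. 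Applied to both factors this gives $V_{\varphi_1}g,\,V_{\varphi_2}h\in W(\mathcal{F}L^1,L^2)$, and since $\mathcal{F}L^1$ is invariant under complex conjugation, also $\overline{V_{\varphi_2}h}\in W(\mathcal{F}L^1,L^2)$.

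It then remains to multiply. Because $\mathcal{F}L^1$ is a Banach algebra under pointwise multiplication and the global exponents satisfy $L^2\cdot L^2\hookrightarrow L^1$ by Cauchy--Schwarz, the pointwise product is a bounded bilinear map
\[
W(\mathcal{F}L^1,L^2)\times W(\mathcal{F}L^1,L^2)\longrightarrow W(\mathcal{F}L^1,L^1)=M^1(\rdd).
\]
Combining the two displayed bounds yields
\[
\|V_{\varphi_1}g\,\overline{V_{\varphi_2}h}\|_{M^1}\le C\,\|V_{\varphi_1}g\|_{W(\mathcal{F}L^1,L^2)}\,\|V_{\varphi_2}h\|_{W(\mathcal{F}L^1,L^2)}\le C'\,\|\varphi_1\|_{M^1}\|\varphi_2\|_{M^1}\|g\|_2\|h\|_2,
\]
which is precisely \eqref{eq:c5}.

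It is worth recording a structural reformulation that motivates the argument. Writing $z=(x,\omega)$, a direct computation gives the identity
\[
V_{\varphi_1}g(z)\,\overline{V_{\varphi_2}h(z)}=V_\Phi G\big((x,x),(\omega,-\omega)\big),\qquad G=g\otimes\bar h\in\ldd,\ \ \Phi=\varphi_1\otimes\bar\varphi_2\in M^1(\rdd),
\]
so the product is the restriction of a single tensor-type STFT on $\mathbb{R}^{4d}$ to a $2d$-dimensional subspace. One could alternatively combine a restriction theorem for $M^1$ with the mapping properties of $V_\Phi$, but this is more delicate, since $V_\Phi G$ is only of global $L^2$-type and one must track the behaviour of the global component on the lower-dimensional subspace; it requires the same local-regularity input and is notationally heavier, so I would carry out the amalgam argument above.
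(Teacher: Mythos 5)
Your argument is correct and is essentially the proof that the paper itself relies on: Lemma~\ref{localstft} is not proved in the text but quoted from \cite{cogr03} (estimate (23) in Cor.~4.2), and the proof given there is exactly your amalgam-space scheme, namely $M^1(\rdd)=W(\mathcal{F}L^1,L^1)(\rdd)$ together with the pointwise multiplication relation $W(\mathcal{F}L^1,L^2)\cdot W(\mathcal{F}L^1,L^2)\subseteq W(\mathcal{F}L^1,L^1)$. The single input you flag but do not prove --- that $\varphi\in M^1(\rd)$ and $g\in\lrd$ imply $V_\varphi g\in W(\mathcal{F}L^1,L^2)(\rdd)$ with norm controlled by $\|\varphi\|_{M^1}\|g\|_2$ --- is precisely Lemma~4.1 of \cite{cogr03}, so once that is cited (or derived, e.g.\ from a change-of-window/convolution argument) your proof coincides with the one behind the paper's reference.
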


\vs

\begin{theorem}
Let $\varphi_1, \varphi_2 \in M^1(\rd)$ and $1\le p \le \infty$. Then
$\B $ is bounded from $ \sch^p$  to $  M^{p,1}(\rdd)$ and
satisfies 
\[
\|\B T \|_{M^{p,1}} \le C \, \| \varphi_1 \|_{M^1} \| \varphi_2 \|_{M^1} \| T \|_{\sch^p}
\]
for all $T\in \sch^p$, with some fixed constant $C>0$.
\end{theorem}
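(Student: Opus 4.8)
The plan is to establish the two endpoint estimates $p=1$ and $p=\infty$ and then obtain the whole range by complex interpolation, in the same spirit as the proof of Theorem~\ref{T:berezin_s_p}. For the interpolation step I would use the standard identities $[\tc,\sch^\infty]_\theta=\sch^p$ and $[M^{1,1},\mif]_\theta=M^{p,1}$ with $1/p=1-\theta$; note that $M^1=M^{1,1}$, so the two target endpoint spaces are $M^1$ and $\mif=M^{\infty,1}$, and the two source endpoints are $\tc$ and $\sch^\infty$.

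For $p=1$ I would argue directly from the singular value decomposition. If $T\in\tc$ with $Tf=\sum_k s_k\langle f,g_k\rangle h_k$ and $(s_k)\in\ell^1$, then Lemma~\ref{L:berezin_representation} gives $\B T=\sum_k s_k V_{\varphi_1}h_k\,\overline{V_{\varphi_2}g_k}$. By Lemma~\ref{localstft} each product $V_{\varphi_1}h_k\,\overline{V_{\varphi_2}g_k}$ lies in $M^1(\rdd)$ with norm at most $C\|\varphi_1\|_{M^1}\|\varphi_2\|_{M^1}$ (using $\|g_k\|_2=\|h_k\|_2=1$), so the series converges absolutely in $M^1$ and $\|\B T\|_{M^1}\le C\|\varphi_1\|_{M^1}\|\varphi_2\|_{M^1}\sum_k s_k=C\|\varphi_1\|_{M^1}\|\varphi_2\|_{M^1}\|T\|_{\tc}$, which is the desired $p=1$ bound.

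For $p=\infty$ the direct summation breaks down (the singular values are no longer summable), and I would instead exploit the convolution structure of $\B$. Writing $\Phi=W(\varphi_1,\varphi_2)$ and $\widetilde\Phi(u)=\overline{\Phi(-u)}$, the covariance identity behind Lemma~\ref{lem:repber} turns $\B T(z)=\langle\sigma,T_z\Phi\rangle$ into the convolution $\B T=\sigma\ast\widetilde\Phi$, where $\sigma$ is the pseudodifferential symbol of $T$. For $T\in\sch^\infty\subseteq B(\ld)$ the inclusions $M^1\subseteq\ld\subseteq M^\infty$ show that $T$ maps $M^1(\rd)$ boundedly into $M^\infty(\rd)$ with norm $\lesssim\|T\|_{B(L^2)}$, so the kernel theorem (Proposition~\ref{T:kernel_hs}(ii)) yields $\sigma\in M^\infty(\rdd)$ with $\|\sigma\|_{M^\infty}\le C\|T\|_{\sch^\infty}$. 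On the other hand $\varphi_1,\varphi_2\in M^1(\rd)$ give $\Phi\in M^1(\rdd)$: by formula~\eqref{L:ftwig} the Fourier transform of $\Phi$ equals $V_{\varphi_2}\varphi_1$ up to a linear change of variables and multiplication by the chirp $e^{-\pi i x\cdot\omega}$, all of which preserve $M^1(\rdd)$, and since $V_{\varphi_2}\varphi_1\in M^1(\rdd)$ and $M^1$ is Fourier invariant, it follows that $\Phi\in M^1(\rdd)$; invariance of $M^1$ under reflection and conjugation then gives $\widetilde\Phi\in M^1(\rdd)$ with $\|\widetilde\Phi\|_{M^1}\le C\|\varphi_1\|_{M^1}\|\varphi_2\|_{M^1}$. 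Finally the convolution relation~\eqref{eq:c1}, $M^\infty\ast M^1\subseteq\mif$, produces $\|\B T\|_{M^{\infty,1}}=\|\sigma\ast\widetilde\Phi\|_{\mif}\le C\|\sigma\|_{M^\infty}\|\widetilde\Phi\|_{M^1}\le C\|\varphi_1\|_{M^1}\|\varphi_2\|_{M^1}\|T\|_{\sch^\infty}$.

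Complex interpolation between these two endpoint estimates then yields $\|\B\|_{\sch^p\to M^{p,1}}\le C\|\varphi_1\|_{M^1}\|\varphi_2\|_{M^1}$ for all $1\le p\le\infty$, which is the claim. I expect the main obstacle to be the $p=\infty$ endpoint: correctly identifying $\B T$ as the convolution $\sigma\ast\widetilde\Phi$ and, in particular, verifying that the reflected and conjugated Wigner distribution $\widetilde\Phi$ lies in $M^1(\rdd)$ with the stated norm control. The remaining ingredients — the convolution inclusion $M^\infty\ast M^1\subseteq M^{\infty,1}$ and the kernel theorem providing $\sigma\in M^\infty$ — are quoted directly from the prerequisites.
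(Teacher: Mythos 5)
Your proposal is correct and follows essentially the same route as the paper: the $p=1$ endpoint via the singular value decomposition, Lemma~\ref{L:berezin_representation} and Lemma~\ref{localstft}; the $p=\infty$ endpoint by identifying $\B T$ as the convolution $\sigma \ast W^{\ast}(\varphi_1,\varphi_2)$ with $\sigma \in M^\infty(\rdd)$ from the kernel theorem and invoking $M^\infty \ast M^1 \subseteq M^{\infty,1}$; and complex interpolation between $[\tc, B(L^2)]_\theta = \sch^p$ and $[M^{1},M^{\infty,1}]_\theta = M^{p,1}$. The only cosmetic difference is that you verify $W(\varphi_1,\varphi_2)\in M^1(\rdd)$ by hand via formula~\eqref{L:ftwig} and the invariance of $M^1$ under the Fourier transform and chirp multiplication, where the paper simply cites the corresponding result from \cite{gr01}.
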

\begin{proof}
Again we prove the boundedness for $p=1$ and $p=\infty $ and then use
interpolation. 

(i) Case $p=1$: Assume $T\in\tc$. Using the spectral
representation of $T$,  Lemma \ref{L:berezin_representation} implies
that 
\[
\B T(z) =  \sum_{n\in\N} s_n V_{\varphi_1}h_k(z) \overline{V_{\varphi_2}g_k(z)},
\]
where $(s_n)_{n\in\N}\in \ell^1$, $s_n \ge 0$ for all $n\in\N$ and $\|T\|_{\tc} = \sum_{n\in\N} s_n$.

By Lemma~\ref{localstft} each product of STFTs $
V_{\varphi_1}h_k(z) \overline{V_{\varphi_2}g_k(z)}$ is in $M^1(\rdd )$
and satisfies 
\begin{equation*}
\| V_{\varphi_1}h_k \overline{V_{\varphi_2}g_k} \|_{M^1}  \le C \,
\underbrace{\| h_k \|_2}_{= 1} \, \|\varphi_1 \|_{M^1} \, \underbrace{\|
  g_k \|_2}_{= 1} \, \|\varphi_2 \|_{M^1} \, .
\end{equation*}
Consequently  the series for $\B T$ converges  absolutely in
$M^1(\rdd)$, and 
\begin{align*}
\| \B T \|_{M^1} & \le \sum_{n\in\N} s_n \, \| V_{\varphi_1}h_k \overline{V_{\varphi_2}g_k} \|_{M^1} \\
& \le C \, \|\varphi_1 \|_{M^1} \, \|\varphi_2 \|_{M^1}\, \sum_{n\in\N} s_n \\
& = C \, \|\varphi_1 \|_{M^1} \, \|\varphi_2 \|_{M^1} \, \|T\|_{\tc}.
\end{align*}

(ii) Case $p=\infty$: Let $T\in B(\ld)$. Since $M^1(\rd)\hookrightarrow \ld
\hookrightarrow M^{\infty}(\rd) $ with continuous embeddings, $T$  can
be considered as a bounded operator $T:M^1(\rd) \to 
M^{\infty}(\rd)$. By Lemma \ref{lem:repber}, there is a
unique $\sigma \in M^{\infty}(\rdd)$ such that 
 \begin{equation*}
\B T(z)  = \left< \sigma, T_z W( \varphi_1, \varphi_2) \right> 
 = \left( \sigma \ast W^{\ast}( \varphi_1, \varphi_2) \right)(z) \, ,
\end{equation*}
where  $F^*(z) = \overline{F(-z)}$ denotes the usual involution.\\
Since $\varphi_1, \varphi_2 \in M^1(\rd)$, the Wigner distribution $W(
\varphi_1, \varphi_2)$ is  in $ M^1(\rdd)$ by
~\cite[Thm.~12.1.2]{gr01}, 
and 
\[
\| W( \varphi_1, \varphi_2) \|_{M^1} \leq C \, \| \varphi_1 \|_{M^1}\,
\| \varphi_2 \|_{M^1}\, .
\]
Finally, we use the convolution relation for \modsp s $M^\infty \ast
M^1 \subseteq \mif $ from ~\cite{cogr03} and thus obtain 
the norm estimate
\[
\| \B T \|_{M^{\infty,1}} \le C_0 \| \sigma \|_{M^{\infty}} \, \| W(
\varphi_1, \varphi_2) \|_{M^1} \leq C \, \| \varphi_1 \|_{M^1}\, \|
\varphi_2 \|_{M^1} \, \| T \|_{B(L^2)}. 
\]

(iii) For $1< p <\infty $ we use  complex interpolation with
$ \sch^p = [\tc, B(L^2) ]_{\theta} $
and
\[ M^{p,1}(\rdd) = [M^{1}(\rdd), M^{\infty,1}(\rdd)]_{\theta} =
[M^{1,1}(\rdd), M^{\infty,1}(\rdd)]_{\theta} \, .
\]
\end{proof}

\subsection{The Berezin Transform and Localization Operators}

The connection between  the Berezin transform and \tf\ localization
operators is explained in the following statement. Again
$p'=\tfrac{p}{p-1}$ denotes the conjugate exponent of $p$.

\begin{theorem}\label{T:berezin_relevance}
Assume that  $\varphi_1, \varphi_2 \in L^2(\rd)$,  $T\in\sch^p$,
$1\le p \leq  \infty$, and $a \in L^{p^{\prime}}(\rdd)$.  Then
\begin{equation}
  \label{eq:c6}
\left< \A a, T \right> = \left< a, \B T \right>.  
\end{equation}
Here the  brackets  on the left-hand side  denote the $\sch^p$-$\sch^{p^{\prime}}$
duality, and  the  brackets on the right-hand side  denote the $L^p$-$L^{p'}$-duality.

Likewise, if  $\varphi_1, \varphi_2 \in M^1(\rd)$, $1\leq p <\infty $,
$T\in\sch^p$,
and  $a \in M^{p^{\prime}, \infty}(\rdd)$, then \eqref{eq:c6} holds
with the $M^{p',\infty }$-$M^{p,1}$-duality on the right hand side. 
\end{theorem}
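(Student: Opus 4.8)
The plan is to prove the duality formula \eqref{eq:c6} by a direct computation starting from the weak definition of the localization operator, and then to justify the formal manipulation by an approximation/density argument in each of the two settings.

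First I would treat the case where $T$ is a finite-rank operator, or more precisely an operator of the form $Tf = \sum_{n} s_n \langle f, g_n\rangle h_n$ with orthonormal systems $(g_n),(h_n)$ as in Lemma~\ref{L:berezin_representation}, truncated to finitely many terms. For such $T$ I would expand the pairing $\langle \A a, T\rangle$ in the Schatten duality. Recall that for the Schatten $\sch^p$-$\sch^{p'}$ duality one has $\langle S, T\rangle = \mathrm{tr}(S T^*)$, and computing this trace against the singular value decomposition of $T$ reduces $\langle \A a, T\rangle$ to $\sum_n \overline{s_n} \langle \loc g_n, h_n\rangle$. Now I would invoke the weak definition~\eqref{eq:c2} of the localization operator, which gives
\[
\langle \loc g_n, h_n\rangle = \langle a, \overline{V_{\varphi_1} g_n}\, V_{\varphi_2} h_n\rangle,
\]
the bracket on the right being the $\lrdd$ (or appropriate) duality. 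Summing over $n$ and comparing with the Berezin representation from Lemma~\ref{L:berezin_representation}, namely $\B T(z) = \sum_n s_n V_{\varphi_1} h_n(z)\overline{V_{\varphi_2} g_n(z)}$, I should recover exactly $\langle a, \B T\rangle$ after taking complex conjugates and matching the roles of $g_n,h_n$ carefully. I expect that bookkeeping with conjugates and with which window goes with which system is the place where sign/conjugation errors can creep in, so the main care here is matching the two sides term by term.

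The genuine obstacle will be the justification of convergence and the interchange of the sum with the integral/pairing, and this is where the two hypotheses on $a$ matter. In the first setting ($\varphi_1,\varphi_2\in\ld$, $a\in L^{p'}$), the boundedness of the Berezin transform from $\sch^p$ to $L^p$ (Theorem~\ref{T:berezin_s_p}) guarantees that both sides of \eqref{eq:c6} are continuous bilinear forms in the pair $(a,T)$ on $L^{p'}\times\sch^p$: the right-hand side is bounded by $\|a\|_{p'}\|\B T\|_p \le \|a\|_{p'}\|\varphi_1\|_2\|\varphi_2\|_2\|T\|_{\sch^p}$, and the left-hand side by $\|\A a\|_{\sch^{p'}}\|T\|_{\sch^p}$ using the mapping property from the table. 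Since finite-rank operators are dense in $\sch^p$ for $1\le p<\infty$, and since the formula holds for finite-rank $T$ by the computation above, it extends by continuity to all of $\sch^p$. For $p=\infty$ one works with $T$ compact, where finite-rank operators are again dense in $\sch^\infty$, so the same extension works; this covers the endpoint that the first claim includes.

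In the second setting ($\varphi_1,\varphi_2\in M^1$, $a\in M^{p',\infty}$, $1\le p<\infty$), the argument is structurally identical but uses the sharper boundedness $\B:\sch^p\to M^{p,1}$ together with the $M^{p',\infty}$-$M^{p,1}$ duality. The key point is that the refined Berezin estimate makes the right-hand pairing $\langle a,\B T\rangle$ well-defined and continuous on $M^{p',\infty}\times\sch^p$, while the table entry ($M^{p',\infty}$ symbols, $M^1$ windows give $\sch^{p'}$ operators) ensures $\A a\in\sch^{p'}$ so the left-hand Schatten pairing makes sense. I would again verify \eqref{eq:c6} on finite-rank operators via~\eqref{eq:c2} and then pass to the limit by density of finite-rank operators in $\sch^p$. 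The restriction to $p<\infty$ in the second statement is exactly what allows this density step, since finite-rank operators are not dense in $B(\ld)$ in operator norm. The main thing to watch is that the pairing in~\eqref{eq:c2}, originally stated for $\lrd$ arguments, extends consistently to the $M^1$-$M^\infty$ setting; this follows from the embeddings $M^1\hookrightarrow\ld\hookrightarrow M^\infty$ and the continuity of the products of STFTs established in Lemma~\ref{localstft}.
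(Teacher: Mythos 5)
Your proposal is correct and follows essentially the same route as the paper: verify the identity $\left< \A a, T \right> = \left< a, \B T \right>$ on finite-rank operators by combining the trace expansion in the singular value basis with the weak definition \eqref{eq:c2} and the Berezin representation of Lemma~\ref{L:berezin_representation}, then pass to general $T\in\sch^p$ by density of finite-rank operators together with the boundedness of $\B$ and the continuity of the duality brackets. The only cosmetic difference is that the paper establishes the series formula \eqref{eq:c7} for the full operator $T$ at once (via the completed orthonormal basis) and then takes the limit only on the Berezin side, whereas you extend both sides by continuity; the substance is identical.
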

\begin{proof}
Let $T \in \sch^p$ and $a\in L^{p^{\prime}}(\rdd)$ or $a\in
M^{p^{\prime},\infty}(\rdd)$, with appropriate windows.  Choose an arbitrary
orthonormal basis $(e_n)_{n\in\N}$ of $\ld$, then 
we have
\begin{align}
\left< \A a, T \right> & = \left< \loc, T \right> \notag \\
& = \mbox{trace}(T^{\ast} \loc), \notag \\
& = \sum_{n\in\N} \left< T^{\ast} \loc e_n, e_n \right>  \notag \\
& = \sum_{n\in\N} \left< \loc e_n, T e_n \right>       \label{eq:c11}
\end{align}
Since $\loc \in \sch ^{p'} $ (see Table~1) and $T\in \sch ^p $, all orthonormal
bases yield the same sum.

Now consider  the singular value decomposition of $T$,
\[
T = \sum_{k} s_k \left< \cdot , g_k \right>h_k,
\]
with $(s_k) \in \ell^p$. The orthonormal system $(g_k)_{k}$ can be
completed to an orthonormal basis $(g_n)_{n\in\N}$ of $\ld$. We
obviously have $Tg_k = s_k h_k$ for $g_k$ a member of the original
collection $(g_k)_k$, whereas $Tg_n = 0$ for all $g_n$ that were
joined to the original system $(g_k)_k$ to form a complete
basis. Thus, choosing the orthonormal basis $(g_n)_{n\in\N}$ for
$(e_n)_{n\in\N}$ in~\eqref{eq:c11}, we obtain 
\begin{align*}
\langle \cA a ,T\rangle & = \sum_{n\in\N} \left< \loc g_n, T g_n
\right> \\
&  = \sum_{k} \left< \loc g_k, T g_k \right> \\ 
&  = \sum_k s_k \left< \loc g_k, h_k \right>.
\end{align*}
Using the weak definition of $\loc $ from~\eqref{eq:c2}, i.e.,  $\left< \loc g_k, h_k \right> =
\left<a , V_{\varphi_2}h_k \, \overline{V_{\varphi_1} g_k}  \right>$
for all $k$, we find that 
\begin{align} \label{eq:c7} 
\left< \A a, T \right> & = \sum_k s_k \langle a, V_{\varphi_2}h_k \overline{ V_{\varphi_1} g_k} \rangle.
\end{align}
Now let
\[
T_N = \sum_{k=1}^N s_k \langle \cdot, g_k \rangle h_k.
\]
be the finite-rank approximation of $T$, so that  $T_N \rightarrow T$
in $\sch^p $ for $1 \le p \leq  \infty$. 

Lemma \ref{L:berezin_representation} yields the representation
\[
\B T_N = \sum_{k=1}^N s_k  \overline{ V_{\varphi_2}g_k} V_{\varphi_1} h_k.
\]
Hence
\begin{align*}
\left< a, \B T_N\right> & = \left< a, \sum_{k=1}^N s_k  V_{\varphi_2}g_k \overline{ V_{\varphi_1} h_k} \right> \\
& = \sum_{k=1}^N s_k \langle a, V_{\varphi_2}g_k \overline{ V_{\varphi_1} h_k} \rangle \\
& = \left< \A a, T_N \right>
\end{align*}
by~\eqref{eq:c7}.  Now let $N \to \infty$. Since  $\B$ is bounded  and
the  duality bracket  $\left<\cdot, \cdot \right>$ is continuous on
$\sch^{p^{\prime}}\times \sch^p$,  $L^{p^{\prime}}\times L^p$, and on
$M^{p^{\prime}, \infty}\times M^{p,1}$, we conclude
\[
\left< a, \B T\right>  = \left< \A a, T \right>
\]
for every $a\in L^{p^{\prime}}(\rdd)$ (or $a \in
M^{p^{\prime},\infty}(\rdd)$) and  $T\in \sch ^p$. 
\end{proof}

The following statement is a simple corollary of Theorem \ref{T:berezin_relevance}.

\begin{theorem}\label{T:berezin_adjoint}
Let $1 \le p < \infty$. 

(i) Assume $\varphi_1, \varphi_2 \in \ld$. Then the operator $\mathcal{A}:L^{p^{\prime}}(\rdd)  \to \sch^{p^{\prime}}$ is the Banach space adjoint of the
operator $\B:\sch^p \to L^p(\rdd)$.

(ii) Assume $\varphi_2, \varphi_2 \in M^1(\rd)$. Then the operator $\mathcal{A}:M^{p^{\prime}, \infty}(\rdd)  \to \sch^{p^{\prime}}$ is the Banach space adjoint of the
operator $\B:\sch^p \to M^{p,1}(\rdd)$.

Thus in both cases we have $\mathcal{B}^{\ast} = \mathcal{A}$.
\end{theorem}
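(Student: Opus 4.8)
The plan is to read the identity off Theorem~\ref{T:berezin_relevance} once the relevant dual spaces have been correctly identified, and then invoke the uniqueness of the Banach space adjoint. Recall that for a bounded operator $\B : X \to Y$ between Banach spaces, the adjoint $\B^{\ast} : Y^* \to X^*$ is the unique bounded operator satisfying $\langle \B^{\ast} y^*, x\rangle_{X^*,X} = \langle y^*, \B x\rangle_{Y^*,Y}$ for all $x \in X$ and $y^* \in Y^*$; its uniqueness rests on the fact that the canonical pairing of $X^*$ with $X$ separates the points of $X^*$. So it suffices to check that $\mathcal{A}$ satisfies this defining relation.

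First I would fix the spaces. In case (i) take $X = \sch^p$ and $Y = L^p(\rdd)$; since $1 \le p < \infty$ we have the isometric identifications $X^* = (\sch^p)^* = \sch^{p'}$ and $Y^* = (L^p)^* = L^{p'}(\rdd)$ recalled in Section~2. In case (ii) take instead $Y = M^{p,1}(\rdd)$, whose dual is $Y^* = M^{p',\infty}(\rdd)$ for $1 \le p < \infty$. In both cases $\B$ is bounded from $X$ to $Y$ by Theorem~\ref{T:berezin_s_p} (resp.\ by its $M^{p,1}$ analogue), while $\mathcal{A}$ is bounded from $Y^*$ into $\sch^{p'} = X^*$ by Table~1; thus both maps are defined on the correct spaces and $\mathcal{A}$ genuinely lands in the dual of $\sch^p$.

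Next I would apply Theorem~\ref{T:berezin_relevance}: for every $a \in Y^*$ and every $T \in X = \sch^p$ it gives
\begin{equation*}
\langle \mathcal{A} a, T\rangle_{\sch^{p'},\, \sch^p} = \langle a, \B T\rangle_{Y^*,\, Y}.
\end{equation*}
This is exactly the relation characterizing $\B^{\ast} a$. Hence, for each fixed $a$, the two elements $\mathcal{A}a$ and $\B^{\ast} a$ of $(\sch^p)^* = \sch^{p'}$ pair identically against every $T \in \sch^p$; as this pairing separates points, $\mathcal{A}a = \B^{\ast} a$. Since $a$ was arbitrary, $\mathcal{A} = \B^{\ast}$, which is the assertion.

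The step requiring the most care is the bookkeeping of the duality relations, and in particular the role of the restriction $1 \le p < \infty$: it is exactly this range that makes $(\sch^p)^* = \sch^{p'}$ and $(L^p)^* = L^{p'}$ (resp.\ $(M^{p,1})^* = M^{p',\infty}$) honest isometric identifications, so that the pairing $\langle \cdot, \cdot\rangle_{\sch^{p'},\sch^p}$ is nondegenerate and $\mathcal{A}$ truly represents the full adjoint. At $p = \infty$ these identifications break down---for instance $(L^\infty)^*$ strictly contains $L^1$---so $\mathcal{A}$, defined only on $L^1(\rdd)$, can no longer be the complete Banach space adjoint, which is why the statement is confined to $p < \infty$. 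Apart from this bookkeeping, the argument is a purely formal consequence of Theorem~\ref{T:berezin_relevance}.
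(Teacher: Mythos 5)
Your argument is correct and is essentially identical to the paper's proof: both identify the dual spaces $(\sch^p)^* = \sch^{p'}$, $(L^p)^* = L^{p'}$ (resp.\ $(M^{p,1})^* = M^{p',\infty}$), invoke Theorem~\ref{T:berezin_relevance} to verify the defining relation of the adjoint, and conclude $\B^* = \A$ by uniqueness. Your additional remark on why the range $1 \le p < \infty$ is essential is accurate but not needed beyond what the paper records.
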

\begin{proof}
The adjoint of $\B$ is the unique  operator $\B^{\ast}:
(L^p(\rdd))^{\ast} \to (\sch^p)^{\ast}$ and  $\B^{\ast}:
(M^{p,1}(\rdd))^{\ast} \to (\sch^p)^{\ast}$,  respectively, such that 
\[
\left<\B T, a  \right> = \left< T, \B^{\ast} a \right>
\]
for all $T\in \sch^p$ and all $a \in L^{p^{\prime}}(\rdd)$
 (or $a \in M^{p^{\prime}, \infty}(\rdd)$). Since  $(L^p)^{\ast} =
L^{p^{\prime}}$ and $(M^{p,1})^{\ast} = M^{p^{\prime}, \infty}$ and
since  by
Theorem \ref{T:berezin_relevance}, 
\[
\left<\B T, a  \right> = \left< T, \A a \right>
\]
 for all $T\in \sch^p$ and all $a \in L^{p^{\prime}}(\rdd)$
 (or $a \in M^{p^{\prime}, \infty}(\rdd)$), we conclude that 
\[ \B^{\ast} = \A.
\]
\end{proof}

For the   $L^p$-case  we obtain a symmetric statement.

\begin{corollary}\label{T:berezin_strong_adjoint}
Let $1 < p < \infty$ and $p^{\prime}$ be the conjugate exponent. 
The Berezin transform $\B:\sch^p \to L^p(\rdd)$ is the Banach space adjoint of the
operator $\mathcal{A}:L^{p^{\prime}}(\rdd)  \to \sch^{p^{\prime}}$, i.e.
$\mathcal{A}^{\ast} = \mathcal{B}$. 

For $p=1$, the operator $\B:\sch^1 \to L^1(\rdd)$ is the Banach space
adjoint of  $\mathcal{A}:L^0(\rdd)  \to \sch^{\infty}$, 
\end{corollary}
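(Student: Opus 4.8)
The plan is to deduce this from Theorem~\ref{T:berezin_adjoint}, which already identifies $\B^{\ast}=\A$, by passing to a \emph{second} adjoint. For $1<p<\infty$ the relevant spaces are reflexive and the conclusion is then formal; the only genuine work is at the endpoint $p=1$, where reflexivity is unavailable.

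For $1<p<\infty$ I would use that both $\sch^p$ and $L^p(\rdd)$ are reflexive. By Theorem~\ref{T:berezin_adjoint}(i) the map $\A:L^{p'}(\rdd)\to\sch^{p'}$ is the Banach space adjoint of $\B:\sch^p\to L^p(\rdd)$, i.e. $\A=\B^{\ast}$. Taking adjoints once more gives $\A^{\ast}=(\B^{\ast})^{\ast}=\B^{\ast\ast}$, a bounded map $(\sch^{p'})^{\ast}\to(L^{p'})^{\ast}$, that is $\sch^p\to L^p(\rdd)$. Under the canonical isometric identifications $(\sch^p)^{\ast\ast}=\sch^p$ and $(L^p)^{\ast\ast}=L^p(\rdd)$, which hold precisely because both spaces are reflexive, the bidual $\B^{\ast\ast}$ coincides with $\B$, so $\A^{\ast}=\B$. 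One can also see this directly from the symmetric duality~\eqref{eq:c6}: the defining relation $\langle\A^{\ast}T,a\rangle=\langle T,\A a\rangle$ for $T\in(\sch^{p'})^{\ast}=\sch^p$ matches $\langle\A a,T\rangle=\langle a,\B T\rangle$ of Theorem~\ref{T:berezin_relevance}, and once the conjugation built into the trace and integration pairings is accounted for, this forces $\A^{\ast}T=\B T$.

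For $p=1$ the target of $\A$ is the space $\sch^{\infty}$ of compact operators, with $(\sch^{\infty})^{\ast}=\sch^1$, while the domain $L^0(\rdd)\subseteq L^{\infty}(\rdd)$ is not reflexive, so the shortcut $\B^{\ast\ast}=\B$ is not available and I would argue directly. A priori $\A^{\ast}$ is only a map $\sch^1\to(L^0)^{\ast}$, and the substantive point is that its range actually lies in the genuine subspace $L^1(\rdd)\hookrightarrow(L^0)^{\ast}$ and that it equals $\B$ there. This embedding is isometric, since compactly supported $L^{\infty}$-functions of modulus at most one recover the $L^1$-norm of any $g\in L^1(\rdd)$ in the duality pairing. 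Now Theorem~\ref{T:berezin_relevance} with $p=1$ applies, because $L^0\subseteq L^{\infty}$: for $T\in\sch^1$ and $a\in L^0(\rdd)$ it gives $\langle\A a,T\rangle=\langle a,\B T\rangle$, and since $\B T\in L^1(\rdd)$ by Theorem~\ref{T:berezin_s_p}, the functional $a\mapsto\langle a,\B T\rangle$ on $L^0$ is represented by the $L^1$-function $\B T$. Hence $\A^{\ast}T=\B T$. The main obstacle is exactly this endpoint bookkeeping: without reflexivity one must verify by hand both that $\A^{\ast}$ does not leave $L^1(\rdd)$ and that it agrees with $\B$ on it.
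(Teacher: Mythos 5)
Your proposal is correct and follows essentially the same route as the paper: for $1<p<\infty$ the identity $\A^{\ast}=\B$ is immediate from Theorem~\ref{T:berezin_adjoint} by reflexivity of $L^p$ and $\sch^p$, and for $p=1$ one uses that $\A$ maps $L^0(\rdd)$ into $\sch^{\infty}$ together with the duality formula \eqref{eq:c6}. Your endpoint treatment is in fact slightly more careful than the paper's, which simply writes $L^1(\rdd)=L^0(\rdd)^{\ast}$, whereas you correctly verify that the functional $a\mapsto\langle a,\B T\rangle$ on $L^0$ is represented by the $L^1$-function $\B T$ via Theorem~\ref{T:berezin_s_p}.
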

\begin{proof}
The statement is clear for $1<p<\infty $, because $L^p$ and $\sch ^p$
are reflexive Banach spaces. For $p=1$ we observe that $\cA $ maps
$L^0(\rdd )$ (the compactly supported functions in $L^\infty (\rdd )$)
into $\sch ^\infty $ (see Table~1), consequently, $\A ^* $ maps $\sch ^1 =
(\sch ^\infty )^*$ into $L^1(\rdd ) = L^0(\rdd )^*$. So \eqref{eq:c6} is still well-defined, and thus $\cA
^{\ast} = \cB $. 
\end{proof}

Note that Corollary \ref{T:berezin_strong_adjoint} does not hold for the
 case of \modsp s, since $M^{p, \infty}$ is not reflexive.

\section{The Kernel of the Berezin Transform}\label{s:kernel}

In this section we study the kernel of the Berezin transform and its
dependence of the windows $\vf _1$ and $\vf _2$. In particular, we
derive conditions  when $\cB $ is one-to-one. 

In the following every operator-valued integral is  to be understood
in the weak sense. Thus
$T = \int s(z) \pi (z) \, dz $ means that
$$
\langle Tf,g\rangle _{\rd } = \int _{\rdd } s(z) \langle \pi (z) f,
g\rangle \, dz = \langle s, V_f g\rangle _{\rdd } \, .$$
for suitable test functions $f$ and $g$. For  $s\in M^\infty (\rdd )$
and  $f,g\in M^1(\rd )$, the integral defines a bounded operator from
$M^1(\rd )$ to $M^\infty (\rd )$.  The support of a
distribution $s\in M^\infty (\rdd )$ is the smallest closed set $E$
such that $\langle s, f \rangle = 0$ for every function $f\in M^1(\rdd
)$ with compact support in the complement $E^c$. 

\begin{proposition} \label{kernelB}
Assume that  $\vf _1, \vf _2 \in M^1(\rd )$ and let $E=\{ z\in \rdd :
V_{\vf _1 } \vf _2 (z) = 0\}$. Let $s\in M^\infty (\rdd )$  and   $T: M^1(\rd ) \to M^\infty
(\rd ) $ be given by 
$$
T = \intrdd s(z) \pi (z) \, dz \, .
$$
Then the following holds: If $T$ is in the kernel of $\cB $, then $\supp
\, s \subseteq E$.

In particular, if $V_{\vf _1} \vf _2 (z) \neq 0$ for all $z\in \rdd $, then
the Berezin transform $\cB $ is one-to-one. 
\end{proposition}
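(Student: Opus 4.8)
The plan is to compute $\cB T$ explicitly for the integral operator $T=\intrdd s(w)\pi(w)\,dw$ and to recognize it as a Fourier transform of a pointwise product, so that the kernel condition becomes a statement about the support of that product. First I would insert the weak interpretation of the operator-valued integral (as set up at the start of Section~\ref{s:kernel}) into the definition of the Berezin transform, obtaining
$$
\cB T(z)=\intrdd s(w)\,\langle \pi(w)\pi(z)\vf _2,\pi(z)\vf _1\rangle\,dw .
$$
Writing the matrix element as a conjugated STFT, $\langle \pi(w)\pi(z)\vf _2,\pi(z)\vf _1\rangle=\overline{V_{\pi(z)\vf _2}(\pi(z)\vf _1)(w)}$, and applying the covariance property (Lemma~\ref{l:prop}(iii)) to pull out the time-frequency shifts, the $z$-dependence collapses into a single unimodular factor and I arrive at
$$
\cB T(z)=\intrdd s(w)\,\overline{V_{\vf _2}\vf _1(w)}\,e^{-2\pi i\cJ z\cdot w}\,dw ,
$$
i.e. $\cB T$ is the symplectic Fourier transform of the product $s\cdot\overline{V_{\vf _2}\vf _1}$.

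Next I would make sense of this product and exploit injectivity of the Fourier transform. Since $\vf _1,\vf _2\in M^1(\rd)$, the STFT $V_{\vf _2}\vf _1$ lies in $M^1(\rdd)$ by the smoothness properties recalled in Section~2, and because $M^1$ is a Banach algebra under pointwise multiplication, the assignment $g\mapsto\langle s,\overline{V_{\vf _2}\vf _1}\,g\rangle$ is a bounded functional on $M^1$; thus $s\cdot\overline{V_{\vf _2}\vf _1}\in M^\infty=(M^1)^{\ast}$ is well defined. The symplectic Fourier transform is the ordinary Fourier transform composed with the invertible linear map $\cJ$, hence a bijection of $\temp(\rdd)$. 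Consequently $T\in\ker\cB$, i.e. $\cB T=0$, is equivalent to $s\cdot\overline{V_{\vf _2}\vf _1}=0$.

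It then remains to convert the vanishing of this product into the support statement. Writing $h=\overline{V_{\vf _2}\vf _1}\in M^1$, its zero set coincides with $E$ up to the reflection $w\mapsto -w$ (a standard symmetry of the STFT), which is immaterial for the conclusion. Given any $f\in M^1$ whose compact support lies in the open set $\{h\neq 0\}$, I would factor $f=h\cdot g$ with $g:=f/h$ and use $s\cdot h=0$ to get $\langle s,f\rangle=\langle s,hg\rangle=\langle s\cdot h,g\rangle=0$; since such $f$ exhaust the admissible test functions, the definition of $\supp s$ gives $\supp s\subseteq\{h=0\}$. The step I expect to be the main obstacle is justifying that $g=f/h$ is again in $M^1$: because $h$ is only continuous (not smooth), this is not elementary and rests on the local invertibility of Feichtinger's algebra, i.e. a Wiener-type $1/h$ lemma for $M^1=S_0$. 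This is precisely where the Tauberian/Wiener machinery recalled in Section~2 enters, and I would isolate it as a separate lemma rather than absorb it into the present argument.

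Finally, the injectivity assertion is immediate: if $V_{\vf _1}\vf _2$ has no zeros then neither does $h$, so $\{h=0\}=\emptyset$, forcing $s=0$ and hence $T=0$. Since by the kernel theorem (Proposition~\ref{T:kernel_hs}(ii)) every bounded $T:M^1(\rd)\to M^\infty(\rd)$ has a symbol in $M^\infty$ and therefore a spreading representation $T=\intrdd s(w)\pi(w)\,dw$ with $s\in M^\infty(\rdd)$, this shows that $\cB$ is one-to-one.
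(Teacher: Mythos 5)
Your proof follows essentially the same route as the paper's: compute $\cB T$ as the (symplectic) Fourier transform of $s$ times the conjugated STFT of the windows, use injectivity of the Fourier transform to conclude that this product vanishes, and then localize via the Wiener-type local inverse ($1/h$) lemma in the algebra $M^1$ --- exactly the step the paper handles by citing Reiter--Stegeman and Katznelson. The only divergence is cosmetic: you end up with $V_{\vf _2}\vf _1$ where the statement uses $V_{\vf _1}\vf _2$ (their zero sets differ by a reflection, a bookkeeping point already present in the paper's own computation), and you correctly flag that this does not affect the injectivity conclusion.
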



\begin{proof}
The Berezin transform of $T$ is 
\begin{align*}
  \cB T(z) & = \langle T \pi (z) \vf _2 , \pi (z) \vf _1\rangle \\
&= \langle s, V_{\pi (z) \vf _1} (\pi (z) \vf _2)\rangle \\
&= \langle s, M_{\cJ z} V_{\vf _1} \vf _2\rangle  \, ,
\end{align*}
where we have used  Lemma~\ref{l:prop}(iii) in the last equality. 

If  $\cB T(z) = 0$ for all $z\in \rdd $, then 
$$
\cB T(z) = \langle s, M_{\cJ z} V_{\vf _1} \vf _2\rangle  = (s \,
\overline{V_{\vf _1} \vf _2}) \, \widehat{} \, (-\cJ z) = 0
$$
for all $z\in \rdd $. Thus the distribution $s \,
\overline{V_{\vf _1} \vf _2}$ is zero in $M^\infty (\rdd )$. 
Choose a test function $\Psi \in M^1(\rdd )$ with compact support $K
\subseteq E^c$. Since $V_{\vf _1} \vf _2 (z) \neq 0$ for $z\not \in E$,
there exists a function $\Psi _1 \in M^1(\rdd )$, such that $\Psi _1
(z) = \frac{1}{V_{\vf _1} \vf _2 (z)}$ for $z\in K$ (see~\cite{rest00}
or \cite[VIII.6.1]{kat68}). Thus we can write $\Psi $ as 
$$
\Psi = \Psi_1 \big( V_{\vf _1} \vf _2  \Psi \big) \in M^1(\rdd )
$$
on all of $\rdd $. Consequently
$$
\langle \sigma , \Psi \rangle = \langle \sigma , \Psi_1 \big(V_{\vf _1}
\vf _2   \Psi \big)\rangle = \langle \sigma \overline{V_{\vf _1}
\vf _2} , \Psi _1 \Psi \rangle = 0 \, .
$$
This is true for all $\Psi$ with compact support in $E^c$, and
therefore   $\supp s \subseteq
 E$, as claimed.  
\end{proof}

We next  investigate the injectivity of the Berezin
transform on the class of Hilbert-Schmidt operators.

\begin{theorem}\label{T:berezin_one_to_one}
Let $\varphi_1, \varphi_2 \in \ld$. Then the
Berezin transform $\B:\sch^2 \to L^2(\rdd)$ is one-to-one,  if and
only if $V_{\varphi_1} \varphi_2 (z) \not= 0$ for almost all $z \in
\rdd$. 
\end{theorem}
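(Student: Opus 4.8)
The plan is to represent the Hilbert--Schmidt operator $T$ through its spreading function and thereby reduce the vanishing condition $\B T = 0$ to a pointwise multiplication identity, exactly mirroring the proof of Proposition~\ref{kernelB}. Every $T\in \sch ^2$ admits a unique representation
\[
T = \intrdd s(z)\, \pi (z)\, dz, \qquad s\in \lrdd,
\]
and the correspondence $s \mapsto T$ is a bijective isometry (up to a constant) from $\lrdd $ onto $\sch ^2$; concretely, $s$ is the symplectic Fourier transform of the Weyl symbol $\sigma$ furnished by Pool's Theorem~\ref{T:kernel_hs}(i), and the symplectic Fourier transform is unitary on $\lrdd$. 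In particular $T=0$ \fif\ $s=0$ almost everywhere, which is the fact that will convert the analytic condition into a statement about $T$.

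Next I would compute $\B T$ in terms of $s$, repeating the calculation in Proposition~\ref{kernelB} verbatim:
\[
\B T(z) = \langle T\pi (z)\vf _2 , \pi (z)\vf _1\rangle = \langle s, V_{\pi (z)\vf _1}(\pi (z)\vf _2)\rangle = \langle s, M_{\cJ z} V_{\vf _1}\vf _2\rangle,
\]
where the last equality uses Lemma~\ref{l:prop}(iii). Since $s$ and $V_{\vf _1}\vf _2$ both lie in $\lrdd$ (the latter by Lemma~\ref{l:prop}(ii)), their product $s\,\overline{V_{\vf _1}\vf _2}$ is in $L^1(\rdd)$, and the line above is precisely its Fourier transform:
\[
\B T(z) = \big(s\,\overline{V_{\vf _1}\vf _2}\big)\,\widehat{}\,(-\cJ z).
\]
Because $z\mapsto -\cJ z$ is a bijection of $\rdd$ and the Fourier transform is injective on $L^1(\rdd)$, the identity $\B T = 0$ in $L^2(\rdd)$ is \emph{equivalent} to the pointwise condition $s(z)\,\overline{V_{\vf _1}\vf _2(z)} = 0$ for almost every $z\in\rdd$.

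From this equivalence both implications follow from the elementary observation that multiplication by $\overline{V_{\vf _1}\vf _2}$ is injective on $\lrdd$ exactly when $V_{\vf _1}\vf _2$ is nonzero almost everywhere. If $V_{\vf _1}\vf _2(z)\neq 0$ for a.e.\ $z$, then $s\,\overline{V_{\vf _1}\vf _2}=0$ a.e.\ forces $s=0$ a.e., hence $T=0$, so $\B$ is one-to-one. Conversely, if $E=\{z\in\rdd: V_{\vf _1}\vf _2(z)=0\}$ has positive measure, I would choose a subset $E'\subseteq E$ of finite positive measure and set $s=\mathbf{1}_{E'}\in \lrdd$; then $s\neq 0$ while $s\,\overline{V_{\vf _1}\vf _2}=0$ a.e., yielding a nonzero $T\in\sch^2$ with $\B T=0$, so $\B$ fails to be injective. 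The only genuine obstacle is the first step, namely justifying that the spreading map is a true bijection $\lrdd \to \sch^2$, so that a nonzero $s$ really produces a nonzero $T$; once this standard \tfa\ fact is invoked, the remainder is a short computation. An entirely parallel route could instead start from Lemma~\ref{lem:repber}, write $\B T = \sigma \ast W^{\ast}(\vf _1,\vf _2)$ and pass to Fourier transforms, but it would then require converting the zero set of $\widehat{W(\vf _1,\vf _2)}$ into that of $V_{\vf _1}\vf _2$ via~\eqref{L:ftwig}, an extra symmetry step that the spreading approach sidesteps.
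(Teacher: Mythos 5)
Your argument is correct and is essentially the paper's own proof viewed on the Fourier side: the paper works with the Weyl symbol $\sigma$ from Pool's theorem and the translates $T_zW(\varphi_1,\varphi_2)$, obtaining $\B T(z)=\big(\hat\sigma\cdot\overline{\widehat{W(\varphi_1,\varphi_2)}}\big)\,\widehat{}\,(-z)$ and then converting the zero set of $\widehat{W(\varphi_1,\varphi_2)}$ into that of $V_{\varphi_1}\varphi_2$ via \eqref{L:ftwig}, whereas you pass to the spreading function $s$ (the symplectic Fourier transform of $\sigma$) and land directly on $s\,\overline{V_{\varphi_1}\varphi_2}$. The essential ingredients are identical in both versions: the unitary correspondence $\lrdd\leftrightarrow\sch^2$ furnished by Pool's theorem, injectivity of the Fourier transform on $L^1(\rdd)$, and the characteristic-function counterexample supported on the zero set for the converse.
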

\begin{proof}
Note that, by formula (\ref{L:ftwig}), $V_{\varphi_1} \varphi_2 (z) \not= 0$ for almost all $z \in
\rdd$ if and only if $W(\varphi_1, \varphi_2) \, \widehat{}\, (z) \not= 0$ for almost all $z \in
\rdd$. If $T\in \hs$, there is a unique symbol  $\sigma \in \ldd$ such that
\begin{align*}
\B T(z) & = \langle \sigma, T_zW(\varphi_1, \varphi_2) \rangle \\
& = \langle \hat{\sigma}, M_{-z}W(\varphi_1, \varphi_2) \,
\widehat{}\, \,   \rangle \\
& = ( \hat{\sigma}\cdot \overline{W(\varphi_1, \varphi_2) \,
  \widehat{} \,\, } )\, \widehat{ }\, (-z).
\end{align*}
Since both $\sigma \in \ldd$ and $W(\varphi_1, \varphi_2) \in \ldd$,
the product $\hat{\sigma}\cdot \overline{W(\varphi_1, \varphi_2) \,
  \widehat{} \, }$ is in $L^1(\rdd)$ and its Fourier transform is well-defined.\\
First  assume that $W(\varphi_1, \varphi_2) \, \widehat{} \, (z) \not= 0$ for almost all $z \in
\rdd$. If $\B T \equiv 0$, then $\hat{\sigma}\cdot
\overline{W(\varphi_1, \varphi_2)\, \widehat{}\, } = 0$ almost everywhere,
whence $\hat{\sigma} = 0$ almost everywhere. Thus $\B$ is
one-to-one.\\ 
Conversely, assume that there exists a set $E \subseteq \rdd$ of
positive measure such that $W(\varphi_1, \varphi_2)\, \widehat{} \, (z) = 0$
for $z \in E$. Without loss of generality we may assume that $0 < |E|
< \infty$. Choose $\sigma \in \ldd$ as the characteristic function
$\hat{\sigma} = \chi_E$  of $E$. Then $\hat{\sigma}\cdot
W(\varphi_1, \varphi_2)\, \widehat{} \,  = 0$ on $\rdd $, hence $\B T
\equiv 0$ for the Hilbert-Schmidt operator $T\in 
\hs$ associated with symbol $\sigma$. Since $\sigma \not= 0$ and thus $T \not=
0$,  $\B$ has a non-trivial kernel and is not one-to-one. 
\end{proof}

\begin{corollary}\label{C:berezin_one_to_one_cor}
Let $\varphi_1, \varphi_2 \in \ld $. 

(i)  If $1\leq p<2$ and $V_{\varphi_1} \varphi_2 \not= 0$ almost
everywhere, then $\B$ is one-to-one from $\sch^p$ to $L^p(\rdd)$.  

(ii)  If $p> 2$ and $\B$ is one-to-one from $\sch^p$ to $L^p(\rdd)$, then $V_{\varphi_1} \varphi_2 \not= 0$ almost everywhere.
\end{corollary}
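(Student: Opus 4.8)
The plan is to treat this purely as a corollary of the Hilbert--Schmidt case, Theorem~\ref{T:berezin_one_to_one}, exploiting the nesting of the Schatten classes rather than redoing any analysis. The two facts I would rely on are: first, that $\sch^p \subseteq \sch^q$ whenever $p \le q$, since $\ell^p(\N) \subseteq \ell^q(\N)$ for the sequences of singular values; and second, that for any bounded operator $T$ the quantity $\B T(z) = \langle T\pi(z)\varphi_2, \pi(z)\varphi_1\rangle$ is a genuine pointwise-defined function of $z$, so that the condition ``$\B T = 0$'' is one and the same whether $\B T$ is viewed in $L^p(\rdd)$ or in $L^2(\rdd)$.

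For part (i) I would argue as follows. Suppose $1 \le p < 2$ and $V_{\varphi_1}\varphi_2 \neq 0$ almost everywhere. Since $p < 2$, we have $\sch^p \subseteq \sch^2$. Take $T \in \sch^p$ with $\B T = 0$; regarding $T$ as a Hilbert--Schmidt operator, its Berezin transform is the same function and hence also vanishes as an element of $L^2(\rdd)$. Theorem~\ref{T:berezin_one_to_one} states that $\B$ is one-to-one on $\sch^2$ exactly under the hypothesis $V_{\varphi_1}\varphi_2 \neq 0$ a.e., so I conclude $T = 0$, giving injectivity of $\B$ on $\sch^p$.

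For part (ii) I would proceed by contraposition. Suppose $p > 2$, so that now $\sch^2 \subseteq \sch^p$, and assume $V_{\varphi_1}\varphi_2$ vanishes on a set of positive measure. Then Theorem~\ref{T:berezin_one_to_one} supplies a nonzero Hilbert--Schmidt operator $T \in \sch^2$ with $\B T = 0$. Because $\sch^2 \subseteq \sch^p$, this same $T$ lies in $\sch^p$, is nonzero, and has $\B T = 0$, so $\B$ is not one-to-one on $\sch^p$. Contrapositively, injectivity of $\B$ on $\sch^p$ forces $V_{\varphi_1}\varphi_2 \neq 0$ a.e.

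I expect no real obstacle here, since all of the substance sits inside the $p = 2$ theorem and the inclusions of Schatten classes are elementary. The one point I would take care to state explicitly is the compatibility flagged above: injectivity transfers across the inclusions precisely because the vanishing of the pointwise function $\B T$ is independent of the ambient $L^p$-space. Notably, none of the Tauberian machinery needs to be re-invoked; it enters only through Theorem~\ref{T:berezin_one_to_one}.
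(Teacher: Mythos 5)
Your proposal is correct and follows essentially the same route as the paper: both parts reduce to the Hilbert--Schmidt case of Theorem~\ref{T:berezin_one_to_one} via the inclusions $\sch^p\subseteq\sch^2$ for $p<2$ and $\sch^2\subseteq\sch^p$ for $p>2$ (the paper states (ii) directly, you state its contrapositive, which is the same argument). Your explicit remark that the vanishing of the pointwise function $\cB T$ is independent of the ambient $L^p$-space is a sound point of care that the paper leaves implicit.
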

\begin{proof}
(i)  The assumption $V_{\varphi_1} \varphi_2 \not= 0$ almost
everywhere implies that $\B$ is one-to-one on $\hs$. In particular
$\B$ is one-to-one on the smaller space $\sch^p \subseteq \hs$ for
$p<2$. \\ 
(ii) If $\B$ is one-to-one on $\sch^p$ for $p>2$, the $\B$ is one-to-one on $\hs \subseteq \sch^p$ and by Theorem \ref{T:berezin_one_to_one} $V_{\varphi_1} \varphi_2 \not= 0$ almost everywhere.
\end{proof}

The following example illustrates the differences between
Proposition~\ref{kernelB} and 
Theorem~\ref{T:berezin_one_to_one}. 
Assume that
$V_{\varphi_2}\varphi_1(w) = 0$ for some $w\in \rdd$, and  consider the \tfs\  $T = \pi(w)
\in B(\ld)$. Using Lemma~\ref{l:prop}(iii),  the Berezin transform of $\pi (w)$ is given as 
\begin{align*}
\B T(z) & = \left< T\pi(z)\varphi_2, \pi(z) \varphi_1 \right> \\
& = \left< \pi(w)\pi(z)\varphi_2, \pi(z) \varphi_1 \right> \\
&= e^{-2\pi i \cJ z \cdot w} \overline{V_{\varphi_2}\varphi_1(w)}
= 0.
\end{align*}
Thus $\B T = 0 \in L^{\infty}(\rdd)$, but $T \not= 0 \in B(\ld)$, hence $\B$ is not one-to-one.

This example can be summarized as follows. 
\begin{corollary}
Let $\varphi_1, \varphi_2 \in \ld$. If $\B: B(\ld) \to L^{\infty}(\rdd)$ is one-to-one, then $V_{\varphi_1}\varphi_2(z) \not=0$ for all $z \in \rdd$. 
\end{corollary}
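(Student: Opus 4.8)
The plan is to prove the contrapositive: I will show that if $V_{\varphi_1}\varphi_2$ vanishes somewhere, then $\B$ fails to be one-to-one on $B(\ld)$. This is precisely the reflection of the worked example preceding the statement, which already exhibits a nonzero time-frequency shift annihilated by $\B$; the corollary is essentially a packaging of that computation, so the work is to line up the hypotheses correctly.

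First I would translate the assumption into the form used in the example. The example vanishing condition is phrased in terms of $V_{\varphi_2}\varphi_1$, whereas the conclusion is stated for $V_{\varphi_1}\varphi_2$, so I need the symmetry relating the two. Writing $z=(x,\omega)$ and using $\pi(z)^\ast = e^{-2\pi i x\cdot\omega}\pi(-z)$, one obtains $|V_{\varphi_1}\varphi_2(z)| = |V_{\varphi_2}\varphi_1(-z)|$, so that the two STFTs have zero sets that are reflections of each other through the origin. Hence, if $V_{\varphi_1}\varphi_2(z_0)=0$ for some $z_0\in\rdd$, then $V_{\varphi_2}\varphi_1(w)=0$ for $w:=-z_0$.

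Next I would invoke the example with this $w$. Taking $T=\pi(w)\in B(\ld)$, a nonzero operator, Lemma~\ref{l:prop}(iii) gives
\[
\B T(z) = \langle \pi(w)\pi(z)\varphi_2, \pi(z)\varphi_1\rangle
= e^{-2\pi i \cJ z \cdot w}\,\overline{V_{\varphi_2}\varphi_1(w)} = 0
\]
for every $z\in\rdd$. Thus $\B T = 0$ in $L^{\infty}(\rdd)$ while $T\neq 0$, so $\B$ has a nontrivial kernel on $B(\ld)$ and is not one-to-one. This is exactly the contrapositive of the assertion, so the proof is complete.

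I expect no genuine obstacle here: the Berezin-transform computation is already carried out in the displayed example, and the only new ingredient is the harmless symmetry interchanging $V_{\varphi_1}\varphi_2$ and $V_{\varphi_2}\varphi_1$ under $z\mapsto -z$. The single point to state carefully is that the unimodular phase factors appearing both in $\pi(z)^\ast$ and in $\B T(z)$ are irrelevant, since throughout only the vanishing of the modulus (equivalently, of the full expression) is at issue.
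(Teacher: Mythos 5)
Your argument is correct and is essentially the paper's own: the corollary is just the packaging of the displayed example, taking $T=\pi(w)$ at a zero $w$ of $V_{\varphi_2}\varphi_1$ and computing $\B T\equiv 0$ via Lemma~\ref{l:prop}(iii), so that $\B$ has the nonzero operator $\pi(w)$ in its kernel. The one point you make explicit that the paper leaves implicit is the passage between $V_{\varphi_1}\varphi_2$ and $V_{\varphi_2}\varphi_1$ via the reflection symmetry $|V_{\varphi_1}\varphi_2(z)|=|V_{\varphi_2}\varphi_1(-z)|$, and you handle it correctly.
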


As a consequence of Proposition~\ref{kernelB} we will derive the
following  fact. For $T\in B(\lrd )$ let 
$$
K_T(z,w) = \langle T\pi (z)\varphi _1, \pi (w)\varphi _2\rangle
$$
be the essential kernel associated to $T$. We note that the
Berezin transform $\cB T $ is the diagonal of $K_T$. The essential
kernel appears in the description of an operator $T$ on the level of
STFTs. It is well known (see, e.g.,~\cite{cogr06}) that every  operator $T$
 can be written as 
 \begin{align*}
Tf &= (\|\vf _1\|_2 \, \| \vf _2\|_2)^{-2} \, \intrdd \intrdd  \langle f, \pi
(z) \vf _1\rangle \, \langle T\pi (z)\varphi _1, \pi (w)\varphi _2\rangle
\, \pi (w) \vf _2 \, dz dw \\
&= (\|\vf _1\|_2 \, \| \vf _2\|_2)^{-2} \,
\intrdd \intrdd V_{\vf _1}f (z) K_T(z,w) \pi (w) \vf _2 \, dz dw  
 \end{align*}
with a suitable weak interpretation of the integrals. For example, if
$\vf _1, \vf _2 \in M^1(\rd )$ and $T$ is bounded from $M^1(\rd )$ to
$M^\infty (\rd )$, then the essential kernel is continuous and
bounded, and the integral is well defined in the weak sense.

\begin{corollary}\label{corsimple}
  Assume that $\varphi _1, \varphi _2\in M^1(\rd ) $ and  $\langle \varphi _2, \pi (z) \varphi _1\rangle \neq 0$ for  all $z\in \rdd $. Then $K_T$ is uniquely determined by its diagonal $\cB T$. 
\end{corollary}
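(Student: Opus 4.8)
The plan is to reduce the statement to the injectivity result of Proposition~\ref{kernelB}, using that the correspondence $T \mapsto K_T$ is linear. Since $K_{T_1}-K_{T_2}=K_{T_1-T_2}$, the kernel $K_T$ is uniquely determined by its diagonal $z\mapsto K_T(z,z)$ exactly when the diagonal map $T\mapsto\big(z\mapsto K_T(z,z)\big)$ is one-to-one on the class of operators in question, namely the bounded operators $T\colon M^1(\rd)\to M^\infty(\rd)$ for which, as noted before the statement, $K_T$ is continuous and bounded (this class contains all of $B(\lrd)$, because $M^1(\rd)\hookrightarrow\lrd\hookrightarrow M^\infty(\rd)$). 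Thus the whole proof reduces to showing that this diagonal map is injective.

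The first real step is to recognize the diagonal as a Berezin transform. A direct computation gives
\[
K_T(z,z)=\langle T\pi(z)\varphi_1,\pi(z)\varphi_2\rangle=\cB^{\varphi_2,\varphi_1}T(z),
\]
i.e.\ the diagonal of the essential kernel is precisely the Berezin transform of $T$ formed with the \emph{reversed} window pair $(\varphi_2,\varphi_1)$. To be able to invoke Proposition~\ref{kernelB} for this reversed pair I must transfer the hypothesis. By assumption $\langle\varphi_2,\pi(z)\varphi_1\rangle=V_{\varphi_1}\varphi_2(z)\neq 0$ for all $z\in\rdd$, and by the standard symmetry of the STFT, $|V_{\varphi_1}\varphi_2(z)|=|V_{\varphi_2}\varphi_1(-z)|$, this is equivalent to $V_{\varphi_2}\varphi_1(z)\neq 0$ for all $z\in\rdd$. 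Hence the zero set $E=\{z\in\rdd:V_{\varphi_2}\varphi_1(z)=0\}$ attached to the reversed pair is empty.

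Finally I would apply Proposition~\ref{kernelB} directly. Writing $T$ in its spreading representation $T=\int_{\rdd}s(z)\pi(z)\,dz$ with a unique $s\in M^\infty(\rdd)$ (available for every bounded $T\colon M^1\to M^\infty$ via the kernel theorem, Proposition~\ref{T:kernel_hs}(ii)), the vanishing of the diagonal means $\cB^{\varphi_2,\varphi_1}T\equiv 0$; Proposition~\ref{kernelB} applied to the pair $(\varphi_2,\varphi_1)$ with $E=\emptyset$ then forces $\supp s\subseteq E=\emptyset$, so $s=0$ and $T=0$. This is exactly the injectivity required in the first paragraph, and since $T$ determines $K_T$ by definition, the corollary follows. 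The only genuinely delicate point is the bookkeeping of the two window orders: the diagonal is governed by $(\varphi_2,\varphi_1)$ rather than $(\varphi_1,\varphi_2)$, so the non-vanishing hypothesis has to be pushed through the STFT symmetry before Proposition~\ref{kernelB} becomes applicable; once this is done the argument is a routine linearity and representation argument.
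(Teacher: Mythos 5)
Your proposal is correct and follows essentially the same route as the paper: both reduce the statement to the injectivity of the Berezin transform on bounded operators from $M^1(\rd)$ to $M^\infty(\rd)$, which is exactly the content of Proposition~\ref{kernelB} once the non-vanishing of $V_{\varphi_1}\varphi_2$ is assumed everywhere. Your extra care with the window order --- observing that the diagonal of $K_T$ is the Berezin transform for the \emph{reversed} pair $(\varphi_2,\varphi_1)$ and transferring the hypothesis via the symmetry $|V_{\varphi_1}\varphi_2(z)|=|V_{\varphi_2}\varphi_1(-z)|$ --- is a point the paper glosses over, and it tightens rather than changes the argument.
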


\begin{proof}
  Since $V_{\vf _1} \vf _2$ does not have any zero, the Berezin
  transform is one-to-one on the space of all bounded operators from
  $M^1$ to $M^\infty $ (Proposition~\ref{kernelB}). Since $T$ is uniquely
  determined by $\cB T$, so is its kernel $K_T$. 
\end{proof}

Statements of this type can be found in the theory of Toeplitz
operators on complex domains, see for instance ~\cite{bergercoburn94,fo89} for
Toeplitz operators on Bargmann-Fock space.  In these cases, the
statement of Corollary~\ref{corsimple} 
is immediate because the kernel is an analytic function. It is quite
surprising that such a statement can be proved  without explicit
analytic structure.  Englis~\cite{Eng06} uses the above
property as an explicit assumption in his  work on Berezin
quantization for general function spaces.


\section{Density Results}

In this section 
we investigate whether  a given  operator on $\ld$ can be approximated
by a localization operator with respect to various norms. In
particular, we would like to understand when  the set of localization
operators is dense  in $B (\ld )$ or in $\sch ^p $. 

For this we will combine Theorem~\ref{T:berezin_adjoint} and Corollary~\ref{T:berezin_strong_adjoint} with the 
following facts (cf. \cite{co90}):\\ 
Let $X,Y$ be Banach spaces and $T:X\to Y$ be a bounded operator and
let $T^*:Y^*\to X^*$ be the 
(Banach space) adjoint operator.

\begin{itemize}
\item $T^*$ is one-to-one on $Y^*$,  if and only if the range
of $T$ is norm-dense in $Y$. 
\item $T$ is one-to-one on $X$,  if and only if the range of $T^*$ is
  $w^*$-dense in $X^*$. 
\end{itemize}

Thus, in view of Theorem~\ref{T:berezin_adjoint},
  the quantization $a \to \A a $ has dense range, \fif\ 
   the Berezin
transform $\B $ is one-to-one. This property was studied  in
Section~\ref{s:kernel}.  
We now apply  these  results and derive  sufficient conditions
for the density of localization operators in a space of
operators. 

First we treat the density in $\cB (\lrd )$. 

\begin{theorem}\label{T:l_infty_weak_star_density}
Let $\varphi _1, \varphi _2\in \ld $ and  $\A:L^{\infty}(\rdd) \to B(\ld)$.\\
If $V_{\varphi_1} \varphi_2(z)
\not= 0$ for almost all $z\in\rdd$, then
$\mbox{ran}(\mathcal{A})$ is weak*-dense in $B(\ld)$.
\end{theorem}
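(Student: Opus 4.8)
The plan is to apply the general duality principle stated just before the theorem, namely that for a bounded operator $T$ between Banach spaces, $T$ is one-to-one on $X$ if and only if the range of $T^*$ is $w^*$-dense in $X^*$. First I would set up the correct pairing. Since $\A : L^\infty(\rdd) \to B(\ld)$ and we want weak*-density of $\mathrm{ran}(\A)$ in $B(\ld)$, I identify $B(\ld) = (\sch^1)^*$ as the dual of the trace class, and I view $L^\infty(\rdd) = (L^1(\rdd))^*$. By Corollary~\ref{T:berezin_strong_adjoint} (the case $p=1$), the Berezin transform $\B : \sch^1 \to L^1(\rdd)$ is the Banach space adjoint of $\A : L^0(\rdd) \to \sch^\infty$, equivalently $\A = \B^*$. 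Thus $\A$ is precisely the adjoint of the map $\B : \sch^1 \to L^1(\rdd)$.

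With this identification, the second bullet point of the duality principle applies directly with $X = \sch^1$, $Y^* = L^\infty(\rdd)$, $X^* = B(\ld)$, and $T = \B|_{\sch^1}$, $T^* = \A$. The principle then says that $\mathrm{ran}(\A) = \mathrm{ran}(\B^*)$ is $w^*$-dense in $(\sch^1)^* = B(\ld)$ if and only if $\B$ is one-to-one on $\sch^1$. So the proof reduces entirely to verifying that the hypothesis $V_{\varphi_1}\varphi_2(z) \neq 0$ for almost all $z$ implies injectivity of $\B$ on $\sch^1$.

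For that injectivity step I would invoke the results of Section~\ref{s:kernel}. Since $\sch^1 \subseteq \sch^2$, it suffices to know $\B$ is one-to-one on $\sch^2$, which is exactly Theorem~\ref{T:berezin_one_to_one}: under the almost-everywhere non-vanishing hypothesis on $V_{\varphi_1}\varphi_2$, the Berezin transform is injective on the Hilbert--Schmidt class, hence a fortiori on the smaller space $\sch^1$. This is precisely the content of Corollary~\ref{C:berezin_one_to_one_cor}(i) in the case $p=1$, so I can cite it directly.

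The only genuine subtlety to be careful about is the bookkeeping of dualities: the weak* topology on $B(\ld)$ is the one induced by the predual $\sch^1$, and one must confirm that the pairing implicit in $\A = \B^*$ matches the $\sch^1$--$B(\ld)$ duality bracket of Theorem~\ref{T:berezin_relevance}, so that the abstract $w^*$-density statement is about the correct topology. Once the identification $\A = \B^*$ from Corollary~\ref{T:berezin_strong_adjoint} is in place, this is routine, and there is no hard analytic estimate remaining. I would write the proof as: by Corollary~\ref{T:berezin_strong_adjoint}, $\A = \B^*$ where $\B : \sch^1 \to L^1(\rdd)$; by the hypothesis and Corollary~\ref{C:berezin_one_to_one_cor}(i), $\B$ is one-to-one on $\sch^1$; hence by the second duality principle the range of $\A = \B^*$ is $w^*$-dense in $B(\ld) = (\sch^1)^*$.
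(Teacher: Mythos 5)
Your proof is correct and follows essentially the same route as the paper: injectivity of $\B$ on $\sch^1$ via Corollary~\ref{C:berezin_one_to_one_cor}(i) (through Theorem~\ref{T:berezin_one_to_one} and $\sch^1\subseteq\sch^2$), combined with the identification $\A=\B^{\ast}$ and the stated duality principle. The only quibble is the citation for that identification: the statement $\A=\B^{\ast}$ relative to the $L^1$--$L^{\infty}$ and $\sch^1$--$B(\ld)$ pairings is Theorem~\ref{T:berezin_adjoint}(i) with $p=1$ (which is what the paper invokes), whereas Corollary~\ref{T:berezin_strong_adjoint} asserts the reverse relation $\A^{\ast}=\B$, which does not formally yield $\A=\B^{\ast}$ by taking adjoints since $\sch^1$ and $L^1$ are not reflexive.
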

\begin{proof}
By Corollary \ref{C:berezin_one_to_one_cor}, the condition $V_{\varphi_1} \varphi_2(z)
\not= 0$ for almost all $z\in\rdd$ implies that the Berezin transform $\B:\tc \to L^1(\rdd)$ is one-to-one.
Since $\mathcal{A} = \mathcal{B}^*$ by Theorem \ref{T:berezin_adjoint}, the injectivity of the Berezin transform $\B:\tc \to L^1(\rdd)$ is equivalent to the weak*-density of the range of $\A$.
\end{proof}

For the density of localization operators in the Schatten $p$-classes
we have the following results. As the precise statements depend on the
window class and on the range of $p$, we provide separate
formulations. 


\begin{theorem}\label{T:l_q_large_norm_density}
Assume that  $\varphi _1, \varphi _2\in \ld $ and  $ 2 \le p^{\prime} < \infty$,   and
$\A:L^{p^{\prime}}(\rdd) \to \sch^{p^{\prime}}$. 
If $V_{\varphi_1} \varphi_2(z)
\not= 0$ for almost all $z\in\rdd$, then $\mbox{ran}(\mathcal{A}) = \{
   \loc : a\in L^{p'}\}$ is norm-dense in $\sch^{p^{\prime}}$.
\end{theorem}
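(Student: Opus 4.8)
The plan is to exploit the duality machinery already established in the excerpt, reducing the density claim to an injectivity statement that has effectively been proven.

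The key observation is that Theorem~\ref{T:l_q_large_norm_density} concerns $\A : L^{p'}(\rdd) \to \sch^{p'}$ with $2 \le p' < \infty$, so the conjugate index satisfies $1 < p \le 2$. First I would invoke Corollary~\ref{C:berezin_one_to_one_cor}(i): since $V_{\varphi_1}\varphi_2(z) \neq 0$ for almost all $z$ and $p \le 2$, the Berezin transform $\B : \sch^p \to L^p(\rdd)$ is one-to-one. Next I would recall from Theorem~\ref{T:berezin_adjoint}(i) (or equivalently Corollary~\ref{T:berezin_strong_adjoint}) that $\A : L^{p'}(\rdd) \to \sch^{p'}$ is precisely the Banach space adjoint of $\B : \sch^p \to L^p(\rdd)$, that is, $\A = \B^*$. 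Here I am using that for $1 < p \le 2$ the relevant dualities $(L^p)^* = L^{p'}$ and $(\sch^p)^* = \sch^{p'}$ hold.

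The density then follows from the abstract functional-analytic principle quoted at the start of Section~5: for a bounded operator between Banach spaces, the adjoint $T^*$ is one-to-one if and only if the range of $T$ is norm-dense. Applying this with $T = \B : \sch^p \to L^p(\rdd)$ and $T^* = \A : L^{p'}(\rdd) \to \sch^{p'}$, the injectivity of $\B$ is equivalent to the norm-density of $\mbox{ran}(\A)$ in $\sch^{p'}$. This closes the argument.

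The only point requiring a little care is the boundary behavior of the index range. When $p' = 2$ we have $p = 2$, so I should confirm that Corollary~\ref{C:berezin_one_to_one_cor}(i) genuinely covers $p = 2$; strictly it is stated for $1 \le p < 2$, but the case $p = 2$ is exactly Theorem~\ref{T:berezin_one_to_one}, which gives injectivity of $\B$ on $\sch^2$ under the same hypothesis, so no gap arises. I expect no serious obstacle here: the real content was already carried by the injectivity result of Section~\ref{s:kernel} and the adjoint identification $\A = \B^*$; this theorem is simply the translation of injectivity into density via the standard duality criterion, and the proof is therefore short.
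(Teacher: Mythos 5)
Your argument is correct and is essentially the paper's own proof: reduce to injectivity of $\B$ on $\sch^p$ for the conjugate index $1<p\le 2$ (Corollary~\ref{C:berezin_one_to_one_cor}(i) for $p<2$, Theorem~\ref{T:berezin_one_to_one} for $p=2$) and convert injectivity into density by the adjoint criterion. One small point of orientation: to invoke ``$T^*$ injective iff $\mathrm{ran}\,T$ norm-dense'' you should take $T=\A$ with $\A^*=\B$ (Corollary~\ref{T:berezin_strong_adjoint}) rather than $T=\B$ with $\B^*=\A$, since the latter bullet only yields weak*-density of $\mathrm{ran}(\A)$ directly; in the reflexive range $2\le p'<\infty$ the two identifications coincide, so your parenthetical citation of the corollary already closes this.
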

\begin{proof}
The proof is the same as for  Theorem~\ref{T:l_infty_weak_star_density}.
\end{proof}


\begin{theorem}\label{T:l_q_small_norm_density}
Assume that $\varphi_1, \varphi_2 \in M^1(\rd)$ and $ 1 \leq
p^{\prime} < 2$. Let $\A:L^{p^{\prime}}(\rdd) \to
\sch^{p^{\prime}}$. 
If $V_{\varphi_1} \varphi_2(z)
\not= 0$ for all $z\in\rdd$, then $\mbox{ran}(\mathcal{A})= \{
   \loc : a\in L^{p'}\}$ is norm-dense in $\sch^{p^{\prime}}$.
\end{theorem}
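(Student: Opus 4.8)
The plan is to reuse the adjoint/Hahn--Banach mechanism from the proofs of Theorems~\ref{T:l_infty_weak_star_density} and~\ref{T:l_q_large_norm_density}, but to feed it the injectivity statement of Proposition~\ref{kernelB} in place of Corollary~\ref{C:berezin_one_to_one_cor}. Throughout, let $p\in(2,\infty]$ be the exponent conjugate to $p'\in[1,2)$. Applying the first duality bullet to the bounded operator $\A:L^{p'}(\rdd)\to\sch^{p'}$, the range of $\A$ is norm-dense in $\sch^{p'}$ if and only if the Banach-space adjoint $\A^{*}:(\sch^{p'})^{*}\to(L^{p'})^{*}$ is one-to-one. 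Hence everything reduces to identifying $\A^{*}$ with the Berezin transform $\B$ and proving that this $\B$ is injective on $(\sch^{p'})^{*}$.

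For $1<p'<2$ (so $2<p<\infty$) we have $(\sch^{p'})^{*}=\sch^{p}$ and $(L^{p'})^{*}=L^{p}$, and Corollary~\ref{T:berezin_strong_adjoint} already yields $\A^{*}=\B:\sch^{p}\to L^{p}$. The crux is then the injectivity of $\B$ on $\sch^{p}$ for $p>2$, and this is exactly where the stronger hypotheses enter. Since $\varphi_1,\varphi_2\in M^1(\rd)$ and $M^1(\rd)\hookrightarrow\ld\hookrightarrow M^\infty(\rd)$, every $T\in\sch^{p}\subseteq B(\ld)$ is in particular bounded from $M^1(\rd)$ to $M^\infty(\rd)$; by the kernel theorem (Proposition~\ref{T:kernel_hs}(ii)) it carries a Weyl symbol in $M^\infty(\rdd)$, and applying the (symplectic) Fourier transform, which preserves $M^\infty$, produces a spreading representation $T=\int_{\rdd}s(z)\,\pi(z)\,dz$ with a unique $s\in M^\infty(\rdd)$. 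If $\B T=0$, Proposition~\ref{kernelB} forces $\supp s\subseteq E=\{z:V_{\varphi_1}\varphi_2(z)=0\}$; by hypothesis $E=\emptyset$, so $s=0$ and $T=0$. Thus $\B$ is one-to-one on $\sch^{p}$, and the first duality bullet finishes this case. I expect this injectivity to be the main obstacle: it cannot be obtained from Theorem~\ref{T:berezin_one_to_one} or Corollary~\ref{C:berezin_one_to_one_cor}, because for $p>2$ the class $\sch^{p}$ is strictly larger than $\sch^{2}$, so injectivity on the Hilbert--Schmidt class says nothing about the larger class. This is precisely why one must upgrade the window assumption to $M^1$ and the zero-condition to ``no zeros everywhere.''

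The endpoint $p'=1$ (so $p=\infty$) needs one extra step because $(\sch^{1})^{*}=B(\ld)$ rather than $\sch^{\infty}$. I would argue density directly by Hahn--Banach: $\mathrm{ran}(\A)$ is norm-dense in $\sch^{1}$ iff every $T\in B(\ld)$ with $\langle\A a,T\rangle=0$ for all $a\in L^1(\rdd)$ vanishes. To bring in the injectivity of $\B$, I first extend the pairing identity $\langle\A a,T\rangle=\langle a,\B T\rangle$ from $\sch^{\infty}$ (where it is Theorem~\ref{T:berezin_relevance}) to all $T\in B(\ld)$, now by a direct trace computation rather than the $\sch^{p}$-approximation used there. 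Writing the rank-one operators $P_z f:=\langle f,\pi(z)\varphi_1\rangle\,\pi(z)\varphi_2$, one has $\B T(z)=\operatorname{trace}(TP_z)$ and, as a Bochner integral in $\sch^{1}$, $\A a=\int_{\rdd}a(z)\,P_z\,dz$ (legitimate since $\|a(z)P_z\|_{\sch^1}=|a(z)|\,\|\varphi_1\|_2\|\varphi_2\|_2\in L^1$). Interchanging the bounded functional $\operatorname{trace}(T\,\cdot\,)$ with the integral then yields the desired identity. Consequently any such $T$ satisfies $\B T=0$; since $B(\ld)$ is contained in the bounded operators $M^1(\rd)\to M^\infty(\rd)$, the spreading-symbol argument of the previous paragraph together with Proposition~\ref{kernelB} gives $T=0$, and $\mathrm{ran}(\A)$ is norm-dense in $\sch^{1}$.
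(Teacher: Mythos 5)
Your proposal is correct and follows essentially the same route as the paper, whose proof of this theorem is a one-line reference to the adjoint/duality mechanism of Theorem~\ref{T:l_infty_weak_star_density} combined with Proposition~\ref{kernelB} in place of Theorem~\ref{T:berezin_one_to_one}. The two points you elaborate --- the passage from the $M^\infty$ Weyl symbol to the spreading representation $T=\int s(z)\pi(z)\,dz$ so that Proposition~\ref{kernelB} applies to every $T\in\sch^p\subseteq B(\lrd)$, and the Bochner-integral extension of the pairing $\langle \A a, T\rangle=\langle a,\B T\rangle$ to non-compact $T$ at the endpoint $p'=1$ where $(\sch^1)^{*}=B(\lrd)$ --- are exactly the details the paper leaves implicit, and both are carried out correctly.
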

\begin{proof}
The proof is again identical to the proof of Theorem
\ref{T:l_infty_weak_star_density}, we use Proposition~\ref{kernelB} instead of  Theorem
\ref{T:berezin_one_to_one}. 
\end{proof}

Analogous results hold  for the  symbol class $M^{p,\infty }$, which
is significantly larger than $L^p$.

\begin{theorem}\label{T:m_infty_weak_star_density}
Let $\varphi_1, \varphi_2 \in M^1(\rd)$ and $\A:M^{p^{\prime},\infty}(\rdd) \to \sch^p$, $1 < p^{\prime} < \infty$.\\
Assume that either (a) $1 < p^{\prime} < 2$ and  $V_{\varphi_1} \varphi_2(z)
\not= 0$ for  all $z\in\rdd$, or   \\
(b) $2 \leq p^{\prime} < \infty$ and  $V_{\varphi_1} \varphi_2(z)
\not= 0$ for almost all $z\in\rdd$. 

 Then 
$\mbox{ran}(\mathcal{A})= \{
   \loc : a\in L^{p'}\}$ is norm-dense in $\sch ^{p'}$.
\end{theorem}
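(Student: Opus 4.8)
The plan is to mirror exactly the proof strategy that has already worked for Theorems~\ref{T:l_infty_weak_star_density}, \ref{T:l_q_large_norm_density}, and \ref{T:l_q_small_norm_density}, since the only genuinely new ingredient here is the replacement of the symbol class $L^{p'}$ by the larger modulation-space symbol class $M^{p',\infty}$. The crucial structural fact is the duality established in Theorem~\ref{T:berezin_adjoint}(ii): for $\vf_1,\vf_2\in M^1(\rd)$ and $1\le p<\infty$, the localization operator $\A:M^{p',\infty}(\rdd)\to\sch^{p'}$ is the Banach space adjoint of the Berezin transform $\B:\sch^p\to M^{p,1}(\rdd)$. Combining this with the abstract functional-analytic principle quoted at the start of this section, namely that $T^*$ has norm-dense range in $Y$ if and only if $T$ is one-to-one on $Y^*$ (here with $T=\B$, so that $T^*=\A$), reduces everything to proving that $\B$ is injective on $\sch^p$ under the respective hypotheses (a) and (b).

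First I would fix the correspondence between the index $p'$ used in the statement and the index $p$ on which $\B$ acts. The hypothesis $1<p'<\infty$ means the conjugate exponent $p$ also satisfies $1<p<\infty$, and in case (a) where $1<p'<2$ we have $p>2$, while in case (b) where $2\le p'<\infty$ we have $1<p\le 2$. So the injectivity of $\B$ must be established on $\sch^p$ with $p>2$ in case (a) and with $1<p\le 2$ in case (b). This index-flipping is the one spot where a reader could trip, but it is purely bookkeeping.

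Next I would dispatch the two cases using the injectivity results already proved. In case (b), where $1<p\le 2$, the hypothesis is $V_{\vf_1}\vf_2(z)\neq 0$ for almost all $z$; by Corollary~\ref{C:berezin_one_to_one_cor}(i) this gives injectivity of $\B$ on $\sch^p$ for $p<2$, and the endpoint $p=2$ is covered directly by Theorem~\ref{T:berezin_one_to_one}. In case (a), where $p>2$, the almost-everywhere non-vanishing condition is no longer sufficient (this is precisely the subtlety flagged after Proposition~\ref{T:tauber_classic} via the Lev--Olevski result), so the statement wisely strengthens the hypothesis to $V_{\vf_1}\vf_2(z)\neq 0$ for \emph{all} $z\in\rdd$. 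Under this stronger everywhere-hypothesis, Proposition~\ref{kernelB} shows that $\B$ is one-to-one on the whole space of operators bounded from $M^1$ to $M^\infty$, and since every $T\in\sch^p$ is such an operator (via the embeddings $M^1\hookrightarrow\ld\hookrightarrow M^\infty$), $\B$ is in particular injective on $\sch^p$. In both cases the conclusion follows by the adjoint-density principle applied to $\A=\B^*$.

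The main obstacle — really the only place where thought rather than citation is required — is recognizing which injectivity result applies to which range of $p$, and in particular seeing why case (a) with $p>2$ forces the use of Proposition~\ref{kernelB} (with its everywhere-hypothesis) rather than Corollary~\ref{C:berezin_one_to_one_cor}, whose sufficient direction only covers $p<2$. Once that dichotomy is in place, the proof is a one-line invocation of Theorem~\ref{T:berezin_adjoint}(ii) together with the duality characterization of dense range, exactly as in the preceding theorems; I would simply write ``The proof follows that of Theorem~\ref{T:l_q_small_norm_density}, applying Proposition~\ref{kernelB} in case (a) and Corollary~\ref{C:berezin_one_to_one_cor}(i) together with Theorem~\ref{T:berezin_one_to_one} in case (b).''
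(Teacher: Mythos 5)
Your argument is correct, but it is not the route the paper takes. The paper's own proof of Theorem~\ref{T:m_infty_weak_star_density} is a two-line monotonicity argument: Theorems~\ref{T:l_q_large_norm_density} and~\ref{T:l_q_small_norm_density} already give norm-density of $\{ \loc : a\in L^{p'}(\rdd)\}$ in $\sch^{p'}$, and since $L^{p'}(\rdd)\subseteq M^{p',\infty}(\rdd)$, the set of localization operators with $M^{p',\infty}$-symbols contains a norm-dense set and is therefore norm-dense; no new duality or injectivity input is needed. You instead re-run the adjoint argument directly for the modulation-space pairing, using Theorem~\ref{T:berezin_adjoint}(ii) to reduce to injectivity of $\B$ on $\sch^p$ and then correctly sorting the two cases (Proposition~\ref{kernelB} for $p>2$, i.e.\ case (a); Corollary~\ref{C:berezin_one_to_one_cor}(i) and Theorem~\ref{T:berezin_one_to_one} for $1<p\le 2$, i.e.\ case (b)). Your index bookkeeping and the choice of which injectivity result applies where are both right, and your route buys something the paper's does not: it shows that density of the $M^{p',\infty}$-range is actually \emph{equivalent} to injectivity of $\B:\sch^p\to M^{p,1}$, rather than merely inherited from the smaller symbol class. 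Two small points you should make explicit. First, your paraphrase of the duality principle is garbled: with $T=\B:\sch^p\to M^{p,1}$ the range of $T^*=\A$ lies in $(\sch^p)^*=\sch^{p'}$, and the principle quoted in the paper only yields \emph{weak*}-density of $\mathrm{ran}(\A)$ there; to upgrade to norm-density you must invoke reflexivity of $\sch^{p'}$ for $1<p'<\infty$ (weak* $=$ weak, and convex sets have equal weak and norm closures), or equivalently argue via injectivity of $\A^*$ on $(\sch^{p'})^*=\sch^p$, which by Theorem~\ref{T:berezin_relevance} reduces again to injectivity of $\B$. Second, in case (a) you should note why every $T\in\sch^p$ falls under Proposition~\ref{kernelB}: by the kernel theorem (Proposition~\ref{T:kernel_hs}(ii)) every bounded $T:M^1(\rd)\to M^\infty(\rd)$ admits a spreading representation $T=\intrdd s(z)\pi(z)\,dz$ with $s\in M^\infty(\rdd)$, which is the form that proposition requires. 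Neither point is a genuine gap, but both are steps where a reader could stall.
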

\begin{proof}
By Theorems~\ref{T:l_q_large_norm_density} and~\ref{T:l_q_small_norm_density} the range of $\A $ from
$L^{p'} (\rd ) $ to $\sch ^{p'} $ is norm-dense. Since
$L^{p'}(\rdd) \subseteq M^{p',\infty}(\rdd)$ for $1\leq p' \leq
\infty$,   we deduce that 
\[
\{ \loc|\;a \in L^{p'}(\rdd) \} \subseteq \{\loc|\; a\in M^{p',\infty}(\rdd) \} \subseteq \sch^{p'}
\]
is also norm-dense in $\sch ^{p'} $. 
\end{proof}

In the special case of $p^{\prime} = 2$, we obtain a  complete  
characterization for the density in terms  of the windows (as
highlighted in the introduction). 

\begin{theorem}\label{mainsimpleb}
Let $\varphi _1, \varphi _2\in \lrd $. 
Then the  following conditions are equivalent:

(i) The range  $\{A_a^{\varphi _1, \varphi _2}: a\in L^{2}(\rdd )\} $
is norm-dense in $\sch^{2}(\lrd)$.

(ii)  The Berezin transform $\mathcal{B}$ is one-to-one on $\sch^2$.

(iii)  $\langle \varphi _2 , \pi (z) \varphi _1\rangle \neq 0$  for almost all $z\in\rdd$.
\end{theorem}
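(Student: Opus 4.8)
The plan is to prove Theorem~\ref{mainsimpleb} by establishing the cycle of implications (i) $\Leftrightarrow$ (ii) and (ii) $\Leftrightarrow$ (iii), both of which have essentially already been set up by earlier results in the excerpt. The whole point is that at the endpoint $p'=2$ we have $p=p'=2$, so the Hilbert--Schmidt class $\sch^2$ is its own dual, $L^2(\rdd)$ is its own dual, and the Berezin transform and the quantization $\A$ become adjoints of each other acting between self-dual spaces. This symmetry collapses the one-sided statements of the general theory into a clean equivalence.

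For the equivalence (i) $\Leftrightarrow$ (ii), I would invoke the abstract functional-analytic fact recalled at the start of Section~5: for a bounded operator between Banach spaces, the range is norm-dense if and only if the adjoint is injective. Concretely, Theorem~\ref{T:berezin_adjoint}(i) with $p=2$ gives $\A = \B^*$, where $\B:\sch^2 \to L^2(\rdd)$ and $\A:L^2(\rdd)\to\sch^2$ (using $(p')'=p=2$). Applying the density criterion to $T=\B$ yields: $\mathrm{ran}(\A)=\mathrm{ran}(\B^*)$ is norm-dense in $\sch^2$ if and only if $\B$ is one-to-one on $\sch^2$. This is exactly (i) $\Leftrightarrow$ (ii). I should be a little careful about which operator plays the role of ``$T$'' and which the role of ``$T^*$'' in the abstract lemma, since the first bullet concerns density of $\mathrm{ran}(T)$ in terms of injectivity of $T^*$; here I want density of $\mathrm{ran}(\A)$, and since $\A=\B^*$, I apply the bullet with $T=\B$, $Y=\sch^2$, so $T^*=\A$ and density of $\mathrm{ran}(\A)$ in $\sch^2$ is equivalent to injectivity of $\B$ on $\sch^2$. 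The reflexivity of $\sch^2$ and $L^2$ makes the adjoint relationship symmetric, so no subtlety arises from weak* versus norm duality.

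For the equivalence (ii) $\Leftrightarrow$ (iii), I would simply cite Theorem~\ref{T:berezin_one_to_one}, which states precisely that $\B:\sch^2\to L^2(\rdd)$ is one-to-one if and only if $V_{\varphi_1}\varphi_2(z)\neq 0$ for almost all $z\in\rdd$. To match the phrasing in condition (iii), I note that $\langle \varphi_2,\pi(z)\varphi_1\rangle = V_{\varphi_1}\varphi_2(z)$ by the definition of the STFT, so the two formulations of the nonvanishing condition are identical. Combining the two equivalences then closes the chain (i) $\Leftrightarrow$ (ii) $\Leftrightarrow$ (iii).

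I do not expect a genuine obstacle here, since the theorem is a synthesis of machinery already in place; the main thing to get right is the bookkeeping of indices and the correct orientation of the adjoint relation. The one place demanding attention is confirming that all the hypotheses of Theorem~\ref{T:berezin_adjoint} and Theorem~\ref{T:berezin_one_to_one} are met with only $\varphi_1,\varphi_2\in\lrd$ (rather than $M^1$): indeed, the $p=2$ results in the excerpt require only square-integrable windows, because Theorem~\ref{T:berezin_s_p} gives boundedness $\B:\sch^2\to L^2$ for $\varphi_1,\varphi_2\in\lrd$, and Theorem~\ref{T:berezin_one_to_one} and Theorem~\ref{T:berezin_adjoint}(i) are both stated for $\lrd$ windows. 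Thus the weaker window hypothesis of Theorem~\ref{mainsimpleb} is exactly what these ingredients need, and the proof reduces to quoting them in the right order.
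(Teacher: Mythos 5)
Your proposal is correct and follows exactly the paper's own argument: the equivalence (i) $\Leftrightarrow$ (ii) via Theorem~\ref{T:berezin_adjoint} together with the adjoint/dense-range duality, and (ii) $\Leftrightarrow$ (iii) via Theorem~\ref{T:berezin_one_to_one}. Your extra care about the orientation of the adjoint relation and the $\lrd$ window hypothesis is sound but does not change the route.
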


\begin{proof}
The equivalence of conditions (i) and (ii) follows from
Theorem~\ref{T:berezin_adjoint}, the equivalence of conditions (ii) and (iii) is stated
in  Theorem \ref{T:berezin_one_to_one}.
\end{proof}

This result says that every Hilbert-Schmidt operator can be
approximated by localization operators. Constructive approximation
procedures and error estimates were studied in~\cite{DT10}. 

\vs

\rem\ The equivalence (i) $\, \Leftrightarrow \,$ (iii) in  
Theorem~\ref{mainsimpleb} can be proved easily and directly with the 
Weyl calculus. The Weyl symbol of the localization operator $A_a^{\vf
  _1,\vf _2}$ is given by $a \ast W(\vf _2, \vf _1)$ (see, e.g.,
\cite{wong02}). By  Theorem~\ref{T:kernel_hs} the mapping $\cA : \lrd \to
\sch ^2$ has dense range, \fif\ the subspace $L^2 \ast W(\vf _2, \vf
_1)$ is dense in $L^2(\rdd )$.  According to 
Theorem~\ref{T:tauber_l2}(i),   this is the case, \fif\ $W(\vf
  _2, \vf _1)\, \widehat{}\, (z)  \neq 0$ for almost all $z\in \rdd $. 

This simple argument cannot be extended to arbitrary Schatten classes
for two reasons: first,  we do not have an explicit description of the
Weyl symbols of operators in $\sch ^p, p\neq 2$. Second, except for
the spaces $M^1, L^1$,  and $L^2$ we  do not know how to characterize  the
translation-invariant subspace generated by a function $g$ (in our
case $g= W(\vf _2, \vf _1)$). This is a very subtle point: already for
$L^p, p\neq 1,2,\infty $, the density of $\mathrm{span}\, \{T_zg: z\in
\rdd\}$ does not only depend on the   zero set of
$\hat{g}$~\cite{LO11}. Therefore the characterization of the injectivity
of the Berezin transform on Schatten $p$-classes seem to be a
difficult problem and lies currently  beyond  our techniques.

Finally we address the failure of density of $\mathrm{ran} (\cA ) $ in
$\cB (\lrd )$ in the operator norm. For an 
intuitive understanding, we remark that  localization operators  modify the phase space content
locally, but they do not move energy  in phase
space. Therefore one may expect that localization operators  cannot 
approximate operators that describe phase space transformations, such
as the Fourier transform $\cF $  or Fourier integral operators. 
Indeed, we have the following negative result. For a comparable result
in the theory of Toeplitz operators we refer to
~\cite{bergercoburn87}.

\begin{theorem}\label{T:l_infty_counterexample_fourier_transform}
Let $\varphi _1, \varphi_2 \in \ld \setminus \{0\}$ be arbitrary and
$a \in  L^{\infty}(\rdd)$. Then 
$$
\|\cF - A_a^{\varphi _1, \varphi _2}\|_{op} \geq 1 \, . 
$$
Consequently,  the set of all localization operators with symbols in
$\li(\rdd)$ is \underline{not} dense in $\mathcal{B}(\ld)$ with
respect to the operator norm.
\end{theorem}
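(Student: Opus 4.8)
The plan is to find, for each localization operator $A_a^{\varphi_1,\varphi_2}$ with $a\in L^\infty$, an explicit sequence of unit vectors $f_n\in\ld$ on which $A_a^{\varphi_1,\varphi_2}$ has vanishing ``energy'' in the direction one needs, while $\cF$ retains full energy. The guiding intuition, already flagged in the text, is that a localization operator does not transport energy across phase space: its action is a multiplier on the STFT followed by resynthesis, so it is in a sense ``diagonal'' in the time-frequency picture. A highly concentrated wave packet $\pi(z_n)g$ (a time-frequency shift of a fixed normalized window $g$) should be sent by $A_a^{\varphi_1,\varphi_2}$ to something still localized near the phase-space point $z_n$, whereas $\cF$ moves it to a packet localized near the symplectically rotated point. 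If these two locations separate as $n\to\infty$, the two images become asymptotically orthogonal, forcing $\|\cF - A_a^{\varphi_1,\varphi_2}\|_{op}\ge 1$.

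\medskip
\noindent More concretely, first I would reduce to estimating $\|(\cF - A_a^{\varphi_1,\varphi_2})f_n\|_2$ from below along a well-chosen sequence. Take $f_n=\pi(z_n)g$ for a normalized $g$ and a sequence $z_n\to\infty$ in $\rdd$; since $\cF$ is unitary, $\|\cF f_n\|_2=1$. The key computation is to show $\langle \cF f_n, A_a^{\varphi_1,\varphi_2}f_n\rangle\to 0$ and $\|A_a^{\varphi_1,\varphi_2}f_n\|_2\to 0$ (or more carefully, that the cross term dies), from which
\begin{align*}
\|(\cF - A_a^{\varphi_1,\varphi_2})f_n\|_2^2 &= \|\cF f_n\|_2^2 - 2\,\mathrm{Re}\,\langle \cF f_n, A_a^{\varphi_1,\varphi_2}f_n\rangle + \|A_a^{\varphi_1,\varphi_2}f_n\|_2^2 \\
&\ge 1 - 2\,\mathrm{Re}\,\langle \cF f_n, A_a^{\varphi_1,\varphi_2}f_n\rangle
\end{align*}
tends to a quantity $\ge 1$. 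Since $\|f_n\|_2=1$, this yields $\|\cF - A_a^{\varphi_1,\varphi_2}\|_{op}\ge 1$. The cross term can be attacked using the weak form \eqref{eq:c2}: writing $\langle A_a^{\varphi_1,\varphi_2}f_n,\cF f_n\rangle = \langle a, \overline{V_{\varphi_1}f_n}\,V_{\varphi_2}(\cF f_n)\rangle$ and using the covariance property Lemma~\ref{l:prop}(i) to relate $V_{\varphi_1}(\pi(z_n)g)$ to a shift of $V_{\varphi_1}g$ by $z_n$.

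\medskip
\noindent The decisive point is the separation in phase space between $\mathrm{supp}$-concentration of $V_{\varphi_1}f_n$ and of $V_{\varphi_2}(\cF f_n)$. Because $f_n=\pi(z_n)g$ is concentrated near $z_n$ while $\cF f_n$ is concentrated near the point obtained by applying the Fourier transform's symplectic action (interchanging the time and frequency coordinates), the two STFTs $V_{\varphi_1}f_n$ and $V_{\varphi_2}(\cF f_n)$ are each essentially supported near \emph{different} regions of $\rdd$ once $|z_n|\to\infty$ along a direction where the two images diverge. Their product then converges to $0$ pointwise and, with uniform $L^1$-type control from the isometry property Lemma~\ref{l:prop}(ii), in an integrated sense, so that pairing against the fixed $a\in L^\infty$ gives $\langle a, \overline{V_{\varphi_1}f_n}\,V_{\varphi_2}(\cF f_n)\rangle\to 0$ by dominated convergence.

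\medskip
\noindent \textbf{Main obstacle.} The hard part will be making the ``phase-space separation forces the cross term to vanish'' step fully rigorous and \emph{uniform in $a$}: one must show the pairing tends to $0$ for \emph{every} fixed $a\in L^\infty$, not merely for nice $a$, and the separation argument must be robust against the fact that $\varphi_1,\varphi_2$ are only in $\ld$ (so their STFTs need not decay rapidly). I would handle this by exploiting the isometry $\|V_{\varphi_1}f_n\|_2=\|V_{\varphi_2}(\cF f_n)\|_2=\|\varphi_i\|_2$ to keep the $L^2\times L^2$ product in $L^1$ with a uniform bound, and then showing the overlap integral $\int_{\rdd}|V_{\varphi_1}f_n(w)|\,|V_{\varphi_2}(\cF f_n)(w)|\,dw\to 0$ via a concentration/tightness argument: for any $\eps>0$ choose a compact set capturing all but $\eps$ of each $L^2$ mass, and use that the two concentration regions drift apart. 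Choosing the sequence $z_n$ along a purely ``time'' direction (so that $\cF f_n$ drifts along a ``frequency'' direction) makes the divergence of the two centers transparent and gives the cleanest estimate.
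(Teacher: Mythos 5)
Your proposal is correct and follows essentially the same route as the paper's proof: expand $\|(\cF-A_a^{\varphi_1,\varphi_2})f\|_2^2$, drop the nonnegative term $\|A_a^{\varphi_1,\varphi_2}f\|_2^2$, and kill the cross term $\langle a,\overline{V_{\varphi_1}f}\,V_{\varphi_2}\hat f\rangle$ along $f=\pi(w)g$ with $|w|\to\infty$ by the same tightness-plus-phase-space-separation argument (the paper uses $\widehat{\pi(w)g}=e^{2\pi i x\cdot\omega}\pi(\cJ w)\hat g$ and the disjointness of $w+[-R,R]^{2d}$ and $\cJ w+[-R,R]^{2d}$, which holds for any direction of $w$ since $|\cJ w-w|=\sqrt{2}\,|w|$, so your restriction to a pure time direction is unnecessary but harmless). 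The only caveat is that your first-pass appeal to ``dominated convergence'' would not work as stated; the $\eps$-concentration argument you give in the final paragraph is the correct (and the paper's) way to make the overlap integral small.
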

\begin{proof}
For $f\in\ld$ we have
\begin{align*} \|\mathcal{F}f - A_a^{\varphi_1,
\varphi_2}f\|_2^2 & =\, \|\hat{f}\|_2^2 + \|A_a^{\varphi_1,
\varphi_2}f\|_2^2 - 2\, \mbox{Re}\langle \hat{f}, A_a^{\varphi_1,
\varphi_2}f\rangle  \\
& \ge \, \|\hat{f}\|_2^2 + \|A_a^{\varphi_1, \varphi_2}f\|_2^2 -
2\,\big|\langle \hat{f}, A_a^{\varphi_1, \varphi_2}f\rangle\big| \\
& =\,\,\|\hat{f}\|_2^2 + \|A_a^{\varphi_1, \varphi_2}f\|_2^2 -
2\,\big|\langle a, \overline{V_{\varphi_1}f} V_{\varphi_2}\hat{f}\rangle\big|.
\end{align*}

Now fix  an arbitrary non-zero  $g\in\ld$. 
We will show that
\[\lim_{w \to \infty}
\big|\langle a  , \overline{V_{\varphi_1}\left( \pi (w)g \right)}
V_{\varphi_2}\widehat{\left( \pi (w) g\right)} \rangle\big| = 0\, .
\]
Let  $\eps
> 0$ be given and  choose $R>0$ such that
\[ \iint_{[-R,R]^{2d}} |V_{\varphi_1}g(z)|^2\,dz
\ge (1-\eps)\|g\|_2^2\|\varphi_1\|_2^2
\]
and
\[ \iint_{[-R,R]^{2d}} |V_{\varphi_2}\hat{g}(z)|^2\,dz
\ge (1-\eps)\|g\|_2^2\|\varphi_2\|_2^2.
\]
This  is always possible because $V_\varphi g \in \lrdd $ by the isometry
property of the STFT (Lemma~\ref{l:prop}). 
\newline 
For $z=(x,\omega ) \in \rdd$ let $\mathcal{J}z= (\omega , -x)$. Then
$\widehat{\pi (z)g} = e^{ 2\pi i x\cdot\omega}\pi
(\mathcal{J}z)\hat{g}$.  

Now  define $U_{\eps}:= \mathcal{J}w + [-R,R]^{2d}$ and $V_{\eps}:= w
+ [-R,R]^{2d}$. By the covariance property ~\eqref{l:prop}(i)  and  a change of variables we
obtain 
\begin{align*}
\iint_{V_{\eps}}
|V_{\varphi_1}(\pi(w) g)(z)|^2\,dz & =  \iint_{w+ [-R,R]^{2d}}
|V_{\varphi_1}g(z-w)|^2\,dz \\
&= \iint_{[-R,R]^{2d}} |V_{\varphi_1}g(z)|^2\,dz \\
& \ge (1-\eps)\|g\|_2^2\|\varphi_2\|_2^2 \\
& = (1-\eps)\|\pi(w) g\|_2^2\|\varphi_2\|_2^2\, ,
\end{align*}
and similarly
$$
\iint _{U_\epsilon } |V_{\varphi_2}\widehat{(\pi(w) g)}(z)|^2\,dz
 = \iint_{[-R,R]^{2d}}
|V_{\varphi_2}\hat{g}(z)|^2\,dz \geq  (1-\eps)\|\pi(w)
g\|_2^2\|\varphi_1\|_2^2 \, .
$$
By choosing $|w|$ large enough,  we can achieve
$U_{\eps} \cap V_{\eps} = (\mathcal{J}w + [-R,R]^{2d}) \cap (w+
[-R,R]^{2d}) = \emptyset$, so that 
$U_{\eps} \subset \mathbb{R}^{2d}\setminus V_{\eps}$
and $V_{\eps} \subset \mathbb{R}^{2d}\setminus
U_{\eps}$. Then
\begin{equation*}
   \iint_{U_{\eps}}
|V_{\varphi_1}(\pi(w)g)(z)|^2\,dz  \le
\iint_{\mathbb{R}^{2d}\setminus V_{\eps}}
|V_{\varphi_1}(\pi(w) g)(z)|^2\,dz
 \le \eps \|\pi(w) g\|_2^2\|\varphi_1\|_2^2
\end{equation*}
and
\begin{equation*}
 \iint_{V_{\eps}}
|V_{\varphi_2}(\widehat{\pi(w) g})(z)|^2\,dz
 \le \iint_{\mathbb{R}^{2d}\setminus U_{\eps}}
|V_{\varphi_2}(\widehat{\pi(w) g})(z)|^2\,dz
 \le \eps \|\pi(w) g\|_2^2\|\varphi_2\|_2^2.
 \end{equation*}

Writing $f=\pi(w)g $, we conclude that 
{\allowdisplaybreaks
\begin{align*} \big|\langle a &
,  \overline{V_{\varphi_1}f} \, V_{\varphi_2}\hat{f} \rangle\big| = \big|
\iint_{\mathbb{R}^{2d}}
a(z)\overline{V_{\varphi_1}f(z)}V_{\varphi_2}\hat{f}(z)\,dz
\big|
\\
& \le \|a\|_{\infty}\iint_{\mathbb{R}^{2d}}
|V_{\varphi_1}f(z)|\,|V_{\varphi_2}\hat{f}(z)|\,dz
\\
&= \|a\|_{\infty}\bigg[\iint_{U_{\eps}}
|V_{\varphi_1}f(z) |\,|V_{\varphi_2}\hat{f}(z) |\,dz + \iint_{\mathbb{R}^{2d}\setminus U_{\eps}}
 |V_{\varphi_1}f(z) |\,|V_{\varphi_2}\hat{f}(z) | \,dz\bigg]
\\
& \le\|a\|_{\infty}\bigg[\left(\iint_{U_{\eps}}
|V_{\varphi_1}f(z)|^2\,dz\right)^{1/2}\left(\iint_{U_{\eps}}
|V_{\varphi_2}\hat{f}(z) |^2\,dz\right)^{1/2} \\
& \quad\quad\quad\quad + \left(\iint_{\mathbb{R}^{2d}\setminus
U_{\eps}}
|V_{\varphi_1}f(z)|^2\,dz\right)^{1/2}\left(\iint_{\mathbb{R}^{2d}\setminus
U_{\eps}}
|V_{\varphi_2}\hat{f}(z) |^2\,dz\right)^{1/2}\bigg]\\
& \le\|a\|_{\infty}\bigg[\ \left(\iint_{U_{\eps}}
|V_{\varphi_1}f(z)|^2\,dz\right)^{1/2}|V_{\varphi_2}\hat{f}\|_2  \\
&\quad\quad\quad\quad + \|V_{\varphi_1}f\|_2 \left(\iint_{\mathbb{R}^{2d}\setminus
U_{\eps}}
|V_{\varphi_2}\hat{f}(z) |^2\,dz\right)^{1/2}  \bigg]\\
& \le\|a\|_{\infty}\Big[\|f\|_2 \,
\|\varphi_1\|_2 \cdot\sqrt{\eps}\,\|f\|_2 \,\|\varphi_2\|_2 +
\sqrt{\eps}\,\|f\|_2 \,\|\varphi_1\|_2 \cdot \|f\|_2 \,
\|\varphi_2\|_2 \Big] \\
& = 2\,\|a\|_{\infty}\,\|\varphi_1\|_2 \,\|\varphi_2\|_2 \,\|f\|^2_2 \cdot
\sqrt{\eps} \\
& = C\,\|g\|^2_2  \cdot \sqrt{\eps}
\end{align*}}
This estimate holds for all sufficiently large $w\in \rdd$, and  the
constant is  
$C=2\,\|a\|_{\infty}\,\|\varphi_1\|_2\|\varphi_2\|_2$ independently  of
$w\in \rdd$. Thus the claim
\[\lim_{w \to \infty}
\big|\langle a  , \overline{V_{\varphi_1}\left( \pi (w) g\right)} V_{\varphi_2}\widehat{\left( \pi (w)g \right)}
 \rangle\big| = 0
\]
is proved.
\newline For nonzero $g\in\ld$ and $\epsilon >0$  choose $w\in \rdd  $ such that
\[\big|\langle a \, V_{\varphi_1}(\pi(w) g), V_{\varphi_2}(\widehat{\pi(w)g})
\rangle\big| \le
\frac{\epsilon}{2} \,  \|g\|_2^2.
\]
Then with $f :=\pi(w) g$ we have
\begin{align*} \|\mathcal{F}f - A_a^{\varphi_1,
\varphi_2}f\|_2^2 & \ge \,\,\|\hat{f}\|_2^2 + \|A_a^{\varphi_1,
\varphi_2}f\|_2^2 - 2\,\big|\langle V_{\varphi_2}\hat{f},
aV_{\varphi_1}f\rangle\big|\\
&\ge (1-\epsilon ) \|g\|_2^2  = (1-\epsilon ) \|f\|_2^2 \, .
\end{align*}
since $\|f\|_2 = \|\hat{f}\|_2 = \|g\|_2 = \|\hat{g}\|_2$. This shows
\[\|\mathcal{F} - A_a^{\varphi_1, \varphi_2}\|_{B(L^2)} \ge
1
\]
for all $a\in L^{\infty}(\mathbb{R}^{2d})$, which was to be proved.
\end{proof}

 A similar  argument applies to arbitrary  operators of the
metaplectic representation, since these  operators correspond to
linear transformations of 
phase space. By a more sophisticated argument one can also show that the Fourier
transform cannot be approximated by a  localization operator $\loc $  with a
distributional symbol $a\in M^\infty $ instead of $a\in L^\infty (\rdd
)$.


\def\cprime{$'$} \def\cprime{$'$} \def\cprime{$'$} \def\cprime{$'$}
  \def\cprime{$'$} \def\cprime{$'$}

\bibliographystyle{abbrv}
\bibliography{bibliography-dominik,new,general}

\end{document}